\documentclass[11pt]{article}

\usepackage{latexsym,verbatim,ifthen,graphicx,mathrsfs}

\usepackage[english]{babel}
\usepackage[utf8]{inputenc} 		
\usepackage{amssymb}
\usepackage{amsmath}
\usepackage{amsthm}			
\usepackage{tikz-cd}				
\usetikzlibrary{matrix,arrows,decorations.pathmorphing}
\usepackage[all]{xy}				
\usepackage{hyperref}  			
\usepackage{anysize}								
\marginsize{2.5cm}{2.5cm}{2cm}{2cm}
\usepackage{booktabs}			
\usepackage{enumerate}			
\usepackage{multicol}

\usepackage{mathrsfs}									
\usepackage{fourier}				
\usepackage[Sonny]{fncychap}		
\usepackage{fancyhdr}

\newtheorem{theorem}{Theorem}
\newtheorem{lemma}[theorem]{Lemma}
\newtheorem{proposition}[theorem]{Proposition}
\newtheorem{corollary}[theorem]{Corollary}
\newtheorem{thm}{Theorem}

\theoremstyle{definition}	
\newtheorem{example}[theorem]{Example}
\theoremstyle{definition}
\newtheorem{remark}[theorem]{Remark}
\newtheorem{remarks}[theorem]{Remarks}

\DeclareMathOperator{\Ker}{Ker}

\DeclareMathOperator{\sgn}{sgn}

\DeclareMathOperator{\id}{id}

\newcommand{\Z}{{\mathbb{Z}}}
\newcommand{\Q}{{\mathbb{Q}}}

\newcommand{\C}{{\mathbb{C}}}
\newcommand{\K}{{\mathbb{K}}}	
\newcommand{\Lie}{{\mathbb{L}}}

\definecolor{red}{rgb}{1,0.1,0.1}

\begin{document}

\title{Formality criteria in terms of higher Whitehead brackets}

\author{Urtzi Buijs, José M. Moreno-Fernández \footnote{The authors have been partially supported by the MINECO grants MTM2013-41762-P and MTM2016-78647-P. The second author has also been supported by a Postdoctoral Fellowship of the Max Planck Society. \vskip 1pt 2010 Mathematics subject
classification: 55P62, 55Q15, 18G55.\vskip
 1pt
 Key words and phrases: Rational homotopy theory, Higher order Whitehead products, $L_\infty$-algebra, coformality, Quillen spectral sequence}} 
\date{}
\maketitle
\abstract{We provide two criteria for discarding the formality of a differential graded Lie algebra in terms of higher Whitehead brackets, which are the Lie analogue of the Massey products of a differential graded associative algebra. We also show that formality of a differential graded Lie algebra is not equivalent to the collapse of the Quillen spectral sequence. Finally, we use $L_\infty$ algebras and Quillen's formulation of rational homotopy theory to recover and improve a classical theorem for detecting higher Whitehead products in Sullivan minimal models, and give some applications.}

\section*{Introduction} Since it was first introduced in \cite[\S4]{Del75}, the notion of formality has been essential to several branches of  mathematics. Deep results where the formality of a differential graded Lie algebra (DGL, hereafter) or of a commutative differential graded algebra (CDGA, hereafter) plays a fundamental role are, for instance, Kontsevich's proof of the Quantization Theorem \cite{Kon03}, the existence of a large family of symplectic manifolds with no K\"ahler structure \cite{Del75,Cor86}, and the fact that deformation problems governed by a formal DGL have at most quadratic singularities \cite{Gol88}. 

In rational homotopy theory, which is our main concern in this paper, formality of a positively graded DGL corresponds to \emph{coformality} of the corresponding simply connected homotopy type. The concept of coformality, first introduced in \cite{Nei78}, roughly means that the rational homotopy type of any such a space $X$ is determined by its rational homotopy Lie algebra $\pi_*(\Omega X)\otimes \Q.$ Coformal spaces are essential to rational homotopy theory for several reasons. From a classical point of view, one might see coformal spaces as building blocks for rational homotopy types, since every rational simply connected homotopy type can be realized as a perturbation of the corresponding coformal model (\cite{Ouk78}). More recently, a series of works embracing \cite{Pap04,Ber14,Ber17,Ber17A} prove very interesting results, combining Koszul duality and rational homotopy theory methods, which allow for effectively computing new results on (free and based) loop space homology and other interesting topics. In these, coformality plays a distinguished role.

Our contribution with this work is giving a common perspective to three related concepts which naturally live in, or relate to, the homology of a DGL and the notion of (co)formality: the Lie analogs of the Massey products (called here higher order Whitehead brackets), the Quillen spectral sequence, and the induced $L_\infty$ structures from a homotopy retract. 

In Section \ref{Criterios}, we provide two criteria for discarding the formality of a DGL in terms of higher Whitehead brackets. In the particular case of a positively graded DGL, these criteria can be used to discard the coformality of the corresponding simply connected homotopy type. The result is folklore, yet there seems not to be written account for it. This is Thm. \ref{teorema}, see the corresponding section for the precise details.

\begin{thm}\label{TeoA} Let $L$ be a DGL and let $x_1,\dots , x_k\in H=H_*(L)$ be such that the higher Whitehead bracket set $[x_1,\dots,x_k]$ is non empty. Denote by $[-,\dots,- ]'$ the higher order Whitehead brackets of $H$, seen as a DGL with trivial differential. Then, $L$ is not formal if one of the following conditions hold:
\begin{enumerate}[{(1)}]
\item $0\notin [x_1, \dots , x_k]$.
\item The sets $[x_1, \dots , x_k]$ and $[x_1, \dots , x_k]'$ are not bijective.
\end{enumerate}
\end{thm}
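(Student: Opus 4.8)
The plan is to combine the standard characterisation of formality with the Lie-algebraic analogue of the invariance of Massey products under quasi-isomorphism. Recall that $L$ is formal exactly when it is joined to its homology $(H,0)$ --- that is, $H=H_*(L)$ equipped with zero differential and the induced bracket --- by a finite zig-zag of DGL quasi-isomorphisms. So I would first isolate, as a lemma, the assertion that \emph{every DGL quasi-isomorphism $f\colon L'\to L''$ maps the higher Whitehead bracket set $[y_1,\dots,y_k]_{L'}$ bijectively onto $[f_*y_1,\dots,f_*y_k]_{L''}$, the bijection being the restriction of $f_*\colon H_*(L')\to H_*(L'')$}. This is proved as in the associative/Massey case: pushing a defining system forward along $f$ yields a defining system whose value is $f_*$ of the original value, giving the inclusion $f_*[y_1,\dots,y_k]_{L'}\subseteq [f_*y_1,\dots,f_*y_k]_{L''}$; surjectivity and injectivity then follow by pulling defining systems back along a homotopy inverse of $f$. (Alternatively this may be read off from the homotopy transfer of $L_\infty$-structures used later in the paper.)

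Granting the lemma, argue by contradiction: suppose $L$ is formal and fix a zig-zag linking $L$ with $(H,0)$. Applying the lemma to each arrow (inverting the bijection for arrows pointing the wrong way) and composing gives a bijection $\beta\colon[x_1,\dots,x_k]_L\xrightarrow{\ \sim\ }[\varphi x_1,\dots,\varphi x_k]'$, where $\varphi\colon H\to H$ is the resulting composite --- a priori only a graded Lie automorphism --- and $\beta$ is the restriction of $\varphi$ to the appropriate homogeneous component. Since $\varphi$ is also a DGL automorphism of $(H,0)$, the lemma applied to $\varphi$ itself furnishes a bijection $[x_1,\dots,x_k]'\cong[\varphi x_1,\dots,\varphi x_k]'$; hence $[x_1,\dots,x_k]_L$ and $[x_1,\dots,x_k]'$ are in bijection. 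As $[x_1,\dots,x_k]_L\neq\varnothing$ by hypothesis, this already contradicts $(2)$.

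For $(1)$ I would add the elementary remark that a non-empty higher Whitehead bracket set computed in a DGL whose differential vanishes always contains $0$ (we assume $k\ge 3$; for $k=2$ the bracket is single-valued and neither condition can occur). Indeed, represent each $x_i$ by $x_i$ itself and set the filler $a_S=0$ for every proper $S\subseteq\{1,\dots,k\}$ with $|S|\ge 2$. Since $d=0$, the defining-system equations reduce to the vanishing of the relevant bracket words in the lower fillers: for $|S|=2$ this is $[x_i,x_j]=0$, which is forced by non-emptiness of the set, and for $|S|\ge 3$ it holds because every summand of such a word involves a filler $a_T$ with $|T|\ge 2$. For the same reason, $k\ge 3$ makes the value attached to this defining system equal to $0$. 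Therefore $0\in[\varphi x_1,\dots,\varphi x_k]'$, and pulling $0$ back through $\beta$, which is the restriction of the \emph{linear} map $\varphi$ and hence sends $0$ to $0$, yields $0\in[x_1,\dots,x_k]_L$, contradicting $(1)$.

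I expect the only genuine obstacle to be the invariance lemma, and within it the surjectivity/injectivity half --- transporting defining systems across a quasi-isomorphism, or building the homotopy inverse, in the graded Lie setting, with the attendant signs and combinatorics. Everything downstream is bookkeeping, resting on the two trivialities that $(H,0)$ has ``formal'' (hence $0$-containing) higher brackets and that the induced comparison map on homology is linear.
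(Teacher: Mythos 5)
Your overall architecture coincides with the paper's: quasi-isomorphism invariance of the higher Whitehead bracket sets (Proposition \ref{invarianza}), the observation that a non-empty bracket set in a DGL with zero differential contains $0$ because the all-zero defining system is admissible (Lemma \ref{lema1}), and the composition of these along a formality zig-zag. Your downstream bookkeeping --- in particular noting that the composite comparison map is only an automorphism of $H$, that it is nonetheless a DGL automorphism of $(H,0)$ so the invariance lemma applies to it as well, and that the resulting bijection is the restriction of a linear map and therefore reflects $0$ --- is correct and in fact more explicit than the paper's one-line deduction of Theorem \ref{teorema}.

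The genuine gap is exactly where you suspect it: the invariance lemma. Your proposed mechanism for the surjectivity/injectivity half --- ``pulling defining systems back along a homotopy inverse of $f$'' --- does not literally work, because a DGL quasi-isomorphism between arbitrary DGLs (no connectivity or cofibrancy hypotheses are imposed here) need not admit a homotopy inverse in the category of DGLs; in general only an $L_\infty$ quasi-inverse exists, and transporting a defining system along an $L_\infty$ morphism is not the routine Massey-product computation you describe. The paper's fix shifts the burden from the target to the source: the DGL $\Lie(U)$ parametrizing defining systems is cofibrant in the model structure on DGLs (Lemma \ref{cofib}, via the Lifting Lemma for surjective quasi-isomorphisms), so composition with any quasi-isomorphism induces a bijection on homotopy classes $\bigl[\Lie(U),L\bigr]\cong\bigl[\Lie(U),L'\bigr]$ (Lemma \ref{Biy}); defining systems are then lifted through $\varphi$ rather than pushed along an inverse, and homotopic defining systems have equal values because the path object $L\otimes\Lambda(t,dt)$ is acyclic. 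Without this lifting argument (or an equivalent one), the key lemma --- and hence the whole proof --- is not established.
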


We give examples of the use of Theorem \ref{TeoA} in rational homotopy theory, and characterize those finite products of odd dimensional spheres which are intrinsically coformal (Thm. \ref{IntriCo}).
\begin{thm} The product of $k$ simply connected odd dimensional spheres $S^{n_1}\times \cdots \times S^{n_k}$ is intrinsically coformal if and only if $k\leq 4$, or $k\geq 5$ and $n_i \neq n_{j_1}+\cdots + n_{j_r}-1$ for every $i$ and subset $\{n_{j_1},...,n_{j_r}\}\subseteq\left\{n_1,...,n_k\right\}$, where $r\geq 4$ is  even.
\end{thm}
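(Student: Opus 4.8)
The plan is to convert intrinsic coformality of $X$ into a vanishing statement for transferred $L_\infty$-structures on $L:=\pi_*(\Omega X)\otimes\Q$, and then to read the arithmetic condition off a degree count. First I would record the elementary facts: each $S^{n_i}$ with $n_i$ odd is coformal and has rational homotopy Lie algebra $\Q$ concentrated in the even degree $n_i-1$, so $X$ is coformal and $L$ is the abelian graded Lie algebra with basis $x_1,\dots,x_k$, $|x_i|=n_i-1$. By Quillen's equivalence, $X$ is intrinsically coformal if and only if every connected DGL $M$ with $H_*(M)\cong L$ as graded Lie algebras is formal; by the homotopy transfer theorem such an $M$ is quasi-isomorphic to a minimal $L_\infty$-structure on $H_*(M)=L$, which has $\ell_1=\ell_2=0$ since the induced homology bracket is the abelian one, and $M$ is formal whenever all the remaining maps $\ell_r\colon\Lambda^rL\to L$ of degree $r-2$, $r\ge 3$, vanish. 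So the statement reduces to deciding which such $\ell_r$ can occur on $L$.

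For the ``if'' direction I would show that under the stated hypotheses every $\ell_r$, $r\ge 3$, is forced to vanish, whence every $M$ as above is formal. On the even-degree basis vectors all Koszul signs are trivial, so $\ell_r$ vanishes on repeated arguments and is determined by its values on $r$-tuples $x_{i_1},\dots,x_{i_r}$ of pairwise distinct basis vectors; such a value lies in $L_{\,n_{i_1}+\cdots+n_{i_r}-2}$, which is nonzero only if $n_l=n_{i_1}+\cdots+n_{i_r}-1$ for some $l$ (and then $l\notin\{i_1,\dots,i_r\}$, since the remaining dimensions sum to more than $1$). Reducing this identity mod $2$ forces $r$ to be even, hence $r\ge 4$; also $r\le k$. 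If $k\le 4$ no such identity is possible (the case $k=r=4$ would give $n_l=n_1+\cdots+n_4-1$), and if $k\ge 5$ the displayed numerical condition of the theorem says precisely that none exists. In both cases all $\ell_r$ vanish, the transferred structure is trivial, and $X$ is intrinsically coformal.

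For the ``only if'' direction, suppose $k\ge 5$ and $n_i=n_{j_1}+\cdots+n_{j_r}-1$ with $j_1<\cdots<j_r$ and $r\ge 4$ even (so automatically $i\notin\{j_1,\dots,j_r\}$). I would equip $L$ with the $L_\infty$-structure defined by $\ell_r(x_{j_1},\dots,x_{j_r})=x_i$, extended skew-symmetrically and by zero on all other basis tuples, with every other $\ell_s=0$. The only $L_\infty$-relation that is not vacuous, $\ell_r\circ\ell_r=0$ at arity $2r-1$, holds because the output $x_i$ of the inner $\ell_r$ can never reappear among the inputs $\{x_{j_1},\dots,x_{j_r}\}$ needed to make the outer $\ell_r$ nonzero. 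By rectification there is a connected DGL $M$ with $H_*(M)\cong L$ and, since $\ell_2=\cdots=\ell_{r-1}=0$, every defining system for the $r$-fold Whitehead bracket $[x_{j_1},\dots,x_{j_r}]$ is trivial, so this set is nonempty and reduces to $\pm x_i$; in particular it does not contain $0$. Theorem \ref{TeoA}(1) then shows $M$ is not formal, so $X$ is not intrinsically coformal, completing the equivalence.

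The step I expect to be the main obstacle is the bookkeeping that ties the three pictures together: verifying that the higher Whitehead bracket computed in the minimal $L_\infty$-model is exactly the transferred map $\ell_r$, up to a scalar and with the claimed (zero) indeterminacy — this is what makes both ``$0\notin[x_{j_1},\dots,x_{j_r}]$'' and ``trivial transferred structure $\Rightarrow M$ formal'' legitimate — together with keeping the Koszul signs consistent in the $L_\infty$-relations and in the skew-symmetrization defining the test map $\ell_r$. A smaller but genuine point to settle is the reading of ``$\{n_{j_1},\dots,n_{j_r}\}\subseteq\{n_1,\dots,n_k\}$'' when the $n_i$ are not pairwise distinct: the construction needs $j_1,\dots,j_r$ to be distinct \emph{indices}, i.e.\ distinct basis vectors of $L$, rather than merely distinct dimensions.
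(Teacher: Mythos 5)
Your proposal is correct and takes essentially the same route as the paper: both directions reduce to classifying minimal $L_\infty$-structures on the abelian homotopy Lie algebra via the same parity/degree count, and the non-coformal examples are the same single-bracket structures $\ell_r(x_{j_1},\dots,x_{j_r})=x_i$, detected through the $r$-fold Whitehead bracket set $\{x_i\}\not\ni 0$ and Theorem \ref{teorema}(1). The identification of that bracket set with the transferred $\ell_r$ (the step you flag as the main obstacle) is exactly what the paper outsources to Proposition 3.1 of \cite{HigherWhitehead}; your explicit check of the generalized Jacobi identity and of the $k=4$ case are points the paper leaves implicit, not a different approach.
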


That the collapse at the second page $E^2$ of the Quillen spectral sequence of a simply connected space is not equivalent to the coformality of it is accepted among rational homotopy theorists. However, an explicit example is missing in the literature. We fill this gap in Section \ref{ssQuillen} by providing Example \ref{Ejemplo}, and extend previously known results for DGL's to $L_\infty$ algebras. The following result follows from this example:

\begin{thm} There exist simply connected rational spaces all of whose higher Whitehead products vanish, their Quillen spectral sequence collapses at $E^2$, yet are not coformal.
\end{thm}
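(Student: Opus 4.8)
This is an immediate consequence of Example~\ref{Ejemplo} together with Quillen's equivalence between the rational homotopy category of simply connected spaces and that of connected DGL's. In Example~\ref{Ejemplo} we produce a connected DGL $L=(\Lie(V),d)$ of finite type which is \emph{not} formal, all of whose higher Whitehead brackets contain $0$ and are, as subsets of $H=H_*(L)$, in bijection with the corresponding brackets of $(H,0)$, and whose bracket-length spectral sequence collapses at $E^2$. Applying Quillen's realization functor to $L$ yields a simply connected rational space $X$ under which coformality corresponds to formality of $L$, the higher Whitehead products of $X$ correspond to the higher Whitehead brackets of $L$, and the Quillen spectral sequence of $X$ is the bracket-length spectral sequence of $L$. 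Transporting the three properties of $L$ along this dictionary produces the desired $X$; so all the content lies in Example~\ref{Ejemplo}.

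To construct $L$, the plan is to take a minimal free DGL $(\Lie(V),d)$ with $d=d_2+d_3$, where $d_i$ is the component of $d$ landing in the bracket-length $i$ part of $\Lie(V)$. Since $d$ then increases bracket length by at most two, the only possibly non-zero higher differential of the bracket-length spectral sequence is the one induced by the cubic term $d_3$; hence the sequence collapses at $E^2$ as soon as $d_3$ carries every $d_2$-cycle to a $d_2$-boundary, and one arranges $d_3$ so that this holds. Simultaneously, one fixes the degrees of the generators so that each triple Whitehead bracket to which $d_3$ contributes has full indeterminacy — so that it contains $0$ and coincides with the bracket set computed in $(H,0)$ — and so that all higher Whitehead brackets are empty or contain $0$ for the same degree reasons. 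Checking these properties amounts to short bracket computations in the concrete $L$ once the list of generators and the formula for $d$ are written down; the real difficulty is choosing the degrees so that all of these constraints are compatible with $L$ being non-formal.

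That non-formality is exactly the crux: by design none of the hypotheses of Theorem~\ref{TeoA} are satisfied by $L$, so it cannot be certified by higher Whitehead brackets and a separate invariant is needed. The plan is to detect it cohomologically — the obstruction class carried by $d_3$ in the deformation (Harrison/Chevalley--Eilenberg-type) cohomology that governs the uniqueness of the bigraded, i.e.\ coformal, Lie model of $H$ is non-zero, even though this class maps to $0$ in every higher Whitehead bracket set and induces the zero differential on $E^2$. Equivalently, one transfers a minimal $L_\infty$ structure onto $H$ along a homotopy retract of $L$, as in Section~\ref{ssQuillen}, and shows its isomorphism type is non-trivial although every obstruction that Theorem~\ref{TeoA} measures is trivial. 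This non-triviality verification for the explicit $L$ is the main obstacle; granting it, transferring the three conclusions to the space $X$ via Quillen's equivalence is routine.
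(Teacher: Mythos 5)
You have correctly identified the shape of the argument (a single explicit example with a cubic term that is invisible to the spectral sequence and to the Whitehead brackets but still obstructs formality), but the proposal is a plan rather than a proof. The statement is purely existential, so all of its content lies in exhibiting the example and verifying its three properties, and you explicitly defer both the construction (``once the list of generators and the formula for $d$ are written down'') and the non-formality verification (``the main obstacle''). The paper's Example \ref{Ejemplo} supplies the missing object: the minimal $L_\infty$ algebra on $x,y,z$ of degrees $1,3,6$ with $\ell_2(y,y)=\ell_3(y,x,x)=z$, equivalently the Sullivan minimal model $(\Lambda(x,y,z),d)$ with $|x|=2$, $|y|=4$, $|z|=7$ and $dz=y^2+yx^2$, realized as the total space of a fibration $S^7\to E\to K(\Z,2)\times K(\Z,4)$. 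Non-formality is not detected there by a deformation-cohomology obstruction class as you propose, but by Corollary \ref{SulCuad}: one parametrizes all commutative graded algebra automorphisms $f$ of $\Lambda V$ and checks by a short computation that $fdf^{-1}(z)$ always has a non-zero cubic or quartic term, so no purely quadratic differential exists. Collapse at $E^2$ is a direct computation in the coalgebra $\mathscr C(L)=\Lambda sL$, and the vanishing of all higher Whitehead products is then \emph{deduced} from that collapse via Allday's theorem (the differential $d^{k-1}$ represents the order-$k$ Whitehead products), rather than arranged independently by degree considerations as in your sketch. Without a concrete $L$ and these verifications, nothing has been proved.

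A secondary point: the spectral sequence in the statement is the one defined in Section \ref{ssQuillen} on the chains $\mathscr C(L)=(\Lambda sL,\delta)$ filtered by word length, converging to $H_*(X;\Q)$. The ``bracket-length spectral sequence'' of a minimal free Lie model $(\Lie(V),d)$ filtered by bracket length is a different filtered object (it converges to $\pi_*(\Omega X)\otimes\Q$), so your identification ``the Quillen spectral sequence of $X$ is the bracket-length spectral sequence of $L$'' would itself need proof or replacement; the paper avoids this by working directly with the word-length filtration on the (finite-dimensional, in this example) coalgebra model.
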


Let $(\Lambda V,d)$ be the minimal Sullivan model of a simply connected finite type complex $X$. The rational higher order Whitehead products of $X$can be read off from $d$. This is the main result of a classical paper by Andrews-Arkowitz, \cite[Thm 5.4]{Ark78}. In Section \ref{SullivanYLInf}, we derive and extend a little bit this result using $L_\infty$ structures, giving an independent proof relying on Quillen's formulation of rational homotopy theory. The notation involved in this result is a bit technical, so we refer the reader to Theorem \ref{TeoArkInf} and the subsequent Remarks \ref{Remarkillos} for its precise statement. We summarize it as the following result.

\begin{thm} Let $x_j\in H$,  $1\leq j \leq r$, be such that $\left[x_1,\dots,x_r\right]$ is defined. Let $v\in V^{N-1}$, and let $x\in \left[x_1,\dots,x_r\right]$.  If either $dv\in\Lambda^{\ge r}V$ or the corresponding homotopy retract is adapted to $x$, then $$\langle  v;sx\rangle = \varepsilon \langle  v;s\ell_r(x_1,\dots,x_r)\rangle.$$ 
\end{thm}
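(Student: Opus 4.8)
The plan is to move the whole statement to the Quillen side and to exploit that the minimal Sullivan model of $X$ is the Chevalley--Eilenberg construction of its homotopy $L_\infty$-algebra. Fix a free Lie model $\mathbb{L}_X=(\mathbb{L}(W),\partial)$ of $X$, so that $H\cong H_*(\mathbb{L}_X)=\pi_*(\Omega X)\otimes\Q$; let $(i,p,h)$ be the homotopy retract of $\mathbb{L}_X$ onto $H$ underlying the statement and let $\{\ell_k\}$ be the transferred $L_\infty$-structure. Under the resulting identification $(\Lambda V,d)\cong C^*(H,\{\ell_k\})$, the generator $v$ is (a desuspension of) a functional on $H$, the pairing $\langle v;s(-)\rangle$ is evaluation, and the weight-$r$ component $d_r\colon V\to\Lambda^rV$ of the differential is dual to $\ell_r$. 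Hence $\langle v;s\ell_r(x_1,\dots,x_r)\rangle$ is, up to the sign $\varepsilon$ of the statement, nothing but the structure constant of $d_rv$ against $sx_1\wedge\cdots\wedge sx_r$; the real content is therefore to compute $\langle v;sx\rangle$ for $x$ in the Whitehead bracket set and to match it to this constant.

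First I would recall the two combinatorial descriptions that must be reconciled. On the Whitehead side (Section~\ref{Criterios}), an element $x\in[x_1,\dots,x_r]$ is represented in $\mathbb{L}_X$ by a cycle $z=\sum\pm[a_{S'},a_{S''}]$, the sum running over the splittings $\{1,\dots,r\}=S'\sqcup S''$, assembled from a defining system $\{a_S\}_{S\subsetneq\{1,\dots,r\}}$ with $a_{\{j\}}$ a cycle representing $x_j$ and $\partial a_S=\sum\pm[a_{S'},a_{S''}]$. On the $L_\infty$ side, $\ell_r(x_1,\dots,x_r)=p(\widetilde z)$ with $\widetilde z=\sum\pm[\widetilde z_{S'},\widetilde z_{S''}]$ over the same splittings, where $\widetilde z_{\{j\}}=i(x_j)$ and $\widetilde z_S=h(\sum\pm[\widetilde z_{S'},\widetilde z_{S''}])$ for $2\le|S|<r$. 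The key lemma I would prove is that $\{a_S:=\widetilde z_S\}$ is a bona fide defining system: expanding $\partial\widetilde z_S$ by means of the identities $\partial h+h\partial=\id-ip$ and $pi=\id$, and using that $[x_1,\dots,x_r]$ being defined forces the lower operations $\ell_{|S|}(x_s)_{s\in S}$ to vanish for all proper subsets $S$ (the obstruction-vanishing built into the construction recalled in Thm~\ref{teorema}), the required equation $\partial a_S=\sum\pm[a_{S'},a_{S''}]$ drops out by telescoping. For this defining system $z=\varepsilon\,\widetilde z$ up to $\partial$-boundaries, so in $H$ one has $x=[z]=\varepsilon\,p(\widetilde z)=\varepsilon\,\ell_r(x_1,\dots,x_r)$, whence $\langle v;sx\rangle=\varepsilon\langle v;s\ell_r(x_1,\dots,x_r)\rangle$.

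It remains to reduce an arbitrary $x$ in the set to this special situation; this is exactly what the two hypotheses accomplish. If the homotopy retract is adapted to $x$, then — by the very meaning of adaptedness, namely that a defining system representing $x$ is realised inside the transfer data $\widetilde z_S$ of the retract — the lemma applies to that system and yields the equality. If instead $dv\in\Lambda^{\ge r}V$, then $d_jv=0$ for $2\le j<r$, so dually $v$ annihilates $\Img\ell_j$ for every $j<r$; a direct inspection of how a defining system may be altered shows that the indeterminacy of $[x_1,\dots,x_r]$ lies in $\sum_{2\le j<r}\Img\ell_j$, hence is killed by $v$. Consequently $\langle v;sx\rangle$ does not depend on the chosen $x$ in the set and equals its value on the special element above, namely $\varepsilon\langle v;s\ell_r(x_1,\dots,x_r)\rangle$.

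I expect the main obstacle to be bookkeeping rather than conceptual. First, one must pin down the precise meaning of ``adapted to $x$'' — and the precise form of ``$[x_1,\dots,x_r]$ is defined'' relative to the fixed retract — so that the key lemma runs with no auxiliary choices; this is the role of the technical Remarks~\ref{Remarkillos}. Second, and more delicately, one must track the cascade of Koszul signs produced when the tree sum $\widetilde z$ computing $\ell_r$ is rewritten as the bracket sum $z$ produced by a defining system whose interior carries the homotopy $h$, so that all of them collapse into the single sign $\varepsilon$. The homological ingredients — the identities $\partial h+h\partial=\id-ip$ and $pi=\id$, the vanishing of the lower $\ell_j$'s forced by ``defined'', and the fact that $\langle v;s(-)\rangle$ descends to $H$ because $d$ has no linear part — are routine.
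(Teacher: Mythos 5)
Your overall architecture is the same as the paper's---which proves this statement by combining Propositions 3.1 and Theorem 3.3 of \cite{HigherWhitehead} with the pairing (\ref{pairing}) (see Theorem \ref{TeoArkInf} and Remarks \ref{Remarkillos})---but your key lemma is false as stated, and it fails at exactly the point where the hypothesis ``adapted'' earns its keep. You claim that $[x_1,\dots,x_r]$ being defined forces $\ell_{|S|}\bigl((x_s)_{s\in S}\bigr)=0$ for every proper subset $S$, and you deduce that the transfer data $\{\widetilde z_S\}$ is a bona fide defining system, hence that $\varepsilon\,\ell_r(x_1,\dots,x_r)$ represents an element of $[x_1,\dots,x_r]$. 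What the existence of a defining system actually gives is only that $0$ lies in each sub-bracket set $\bigl[(x_s)_{s\in S}\bigr]$; for $|S|\ge 3$ this yields $\ell_{|S|}\bigl((x_s)_{s\in S}\bigr)\in\sum_{j<|S|}\Img\ell_j$, not $=0$, because the transferred brackets depend on the retract and not on your chosen defining system. Consequently the $ip$-terms in your telescoping do not all vanish: they produce precisely the correction terms of \cite[Prop.~3.1]{HigherWhitehead}, namely $\varepsilon\,\ell_r(x_1,\dots,x_r)=x+\sum_{j=2}^{r-1}\ell_j(\Phi_j)$, and in general $\ell_r(x_1,\dots,x_r)$ does \emph{not} represent an element of the Whitehead bracket set. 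This already fails for $r=4$ whenever some $\ell_3(x_i,x_j,x_k)$ is a nonzero element of $\Img\ell_2$; if your lemma were true, the adaptedness hypothesis in the statement would be vacuous.

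The gap is repairable, and once repaired your proof collapses onto the paper's. In the case $dv\in\Lambda^{\ge r}V$ you do not need the special element to lie in the set: the congruence $x\equiv\varepsilon\,\ell_r(x_1,\dots,x_r)$ modulo $\sum_{j=2}^{r-1}\Img\ell_j$ suffices, since $d_jv=0$ for $j<r$ gives, via (\ref{pairing}), that $\langle v;s\ell_j(\Phi_j)\rangle=0$ for each correction term; this is verbatim Remarks \ref{Remarkillos}(1). (Your ``indeterminacy'' argument is a slightly roundabout version of the same computation and also only needs the congruence.) In the adapted case, the content of your key lemma is essentially the definition of adaptedness---the retract is chosen precisely so that its transfer data realizes a defining system for the given $x$---which is \cite[Thm.~3.3]{HigherWhitehead} and gives the on-the-nose equality $\ell_r(x_1,\dots,x_r)=x$. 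So the two hypotheses of the statement are handled correctly in outline, but the intermediate assertion that the lower transferred operations vanish, and hence that the transfer data is always a defining system, must be weakened to the congruence above for the proof to be valid.
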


\noindent {\bf{Acknowledgment.}} The second author is grateful to the Max Planck Institute for Mathematics in Bonn for its hospitality and financial support.

\subsubsection*{Preliminaries} The base field for all algebraic structures is $\Q$, and gradings are taken over $\Z$. In any algebraic structure, $|x|$ denotes the degree of the element $x$.

An \emph{$L_\infty$ algebra} is a graded vector space $L=\left\{L_n\right\}_{n\in \Z}$ together with skew-symmetric linear maps $\ell_k:L^{\otimes k}\to L$ of degree $k-2$, for $k\geq 1$, satisfying the \emph{generalized Jacobi identities} for every $n\geq 1$: $$\sum_{i+j=n+1} \sum_{\sigma \in S(i,n-i) } \varepsilon(\sigma) \sgn(\sigma)(-1)^{i(j-1)} \ell_{j}\left(\ell_i\left(x_{\sigma(1)},...,x_{\sigma(i)}\right),x_{\sigma(i+1)},...,x_{\sigma(n)}\right) =0.$$ 

\noindent Here, $S(i,n-i)$ are the $(i,n-i)$ shuffles, given by those permutations $\sigma$ of $n$ elements such that $$\sigma(1)<\cdots < \sigma(i) \quad \textrm{ and } \quad \sigma(i+1)<\cdots < \sigma(n);$$ and $\varepsilon(\sigma), \sgn(\sigma)$ stand for the Koszul sign and the signature associated to $\sigma$, respectively. A \emph{differential graded Lie algebra}, DGL henceforth, is an $L_\infty$ algebra $L$ for which $\ell_k=0$ for all $k\geq 3$. In this case $\ell_1=\partial$ and $\ell_2 = [\ ,\ ]$ are the \emph{differential} and the \emph{Lie bracket}, respectively. 

An $L_\infty$ algebra is \emph{minimal} if $\ell_1=0,$ and \emph{reduced} if $L_n=0$ for every $n\leq 0$. The \emph{homology} of an $L_\infty$ algebra is the homology of the underlying chain complex $\left(L,\ell_1\right)$. An \emph{$L_\infty$ morphism} $f:L \to L'$ is a family of skew-symmetric linear maps $\left\{ f_n: L^{\otimes n} \to L'\right\}$ of degree $n-1$ such that the following equation is satisfied for every $n\geq 1$:
\begin{equation}\begin{split} \sum_{i+j=n+1} \ \ \sum_{\sigma \in S(i,n-i)} \varepsilon(\sigma)\sgn(\sigma)(-1)^{i(j-1)} f_j\left(\ell_i\left(x_{\sigma(1)},...,x_{\sigma(i)}\right),x_{\sigma(i+1)},...,x_{\sigma(n)}\right) = \\ 
\sum_{\substack{k\geq 1 \\ i_1+\cdots +i_k =n \\ \tau \in S(i_1,...,i_k)}} \varepsilon(\sigma)\sgn(\sigma)\varepsilon_k \ell_k'\left(f_{i_1}\otimes \cdots \otimes f_{i_k}\right) \left(x_{\tau(1)}\otimes \cdots \otimes x_{\tau(n)}\right),
\end{split}\label{EcuLinf}\end{equation} with $\varepsilon_k$ being the parity of $\sum_{l=1}^{k-1}(k-l)(i_l-1)$. We will reinterpret $L_\infty$ morphisms more compactly in Thm. \ref{linfinito}. An $L_\infty$ morphism is said to be an \emph{$L_\infty$ quasi-isomorphism} if $f_1:\left(L,\ell_1\right)\to\left(L',\ell_1'\right)$ is a quasi-isomorphism of chain complexes, and an \emph{isomorphism} if there exist an inverse $L_\infty$ morphism for $f$. A quasi-isomorphism of minimal $L_\infty$ algebras is an isomorphism. \\

Let $V$ be a graded vector space. In this paper, with the exception of Section \ref{SullivanYLInf}, $\Lambda V$ denotes the cocommutative coalgebra whose diagonal is the unique morphism of graded algebras turning $V$ into its primitives. In Section \ref{SullivanYLInf}, the notation $\Lambda V$ stands for the free commutative graded algebra on $V$. There will be no risk of confusion with the notation, as it will be always clear from the context if we are treating algebras or coalgebras. We write CDGC or CDGA for cocommutative differential graded coalgebra or algebra, respectively. The \emph{suspension} of $V$ is the graded vector space $sV$ satisfying $(sV)_i=V_{i-1}.$ We take for granted the notation $\Lambda^+V, \Lambda^{\leq k}V, \Lambda^k V$, see \cite[Chap. 3]{Yve12}. 

\begin{theorem}\label{linfinito}{\em \cite{Lad95}} (1) $L_\infty$ structures on the graded vector space  $L$ are in bijective correspondence with codifferentials on the coalgebra $\Lambda  sL$.

(2) $L_\infty$ morphisms from $L$ to $L'$ are in bijective correspondence with CDGC morphisms $(\Lambda sL,\delta)\to (\Lambda sL',\delta')$, where $\delta$ and $\delta'$ are the codifferentials equivalent to the corresponding $L_\infty$ structures.
 \end{theorem}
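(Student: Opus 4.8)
The plan is to exploit the cofreeness of $\Lambda sL$ among coaugmented conilpotent cocommutative coalgebras, which reduces every structural statement to a computation on the space of cogenerators. Write $\pi\colon \Lambda sL\to sL$ for the canonical projection. The two facts I would invoke from the structure theory of such coalgebras are: first, that the corestriction $\delta\mapsto \pi\circ\delta$ is a linear bijection from the space of coderivations of $\Lambda sL$ onto $\Hom(\Lambda sL,sL)$; and second, that a coalgebra morphism $F\colon\Lambda sL\to\Lambda sL'$ is uniquely determined by its corestriction $\pi'\circ F\in\Hom(\Lambda sL,sL')$, with every such corestriction arising from a unique morphism. Granting these, both parts become a matter of decomposing the cogenerator-valued maps by arity and then matching signs through the d\'ecalage.

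For part (1), I would decompose a coderivation $\delta$ of degree $-1$ into arity components $\delta_k:=\pi\circ\delta|_{\Lambda^k sL}\colon\Lambda^k sL\to sL$ for $k\geq 1$. Since $\Lambda sL$ is graded cocommutative, each $\delta_k$ is graded symmetric; setting $\ell_k:=s^{-1}\circ\delta_k\circ s^{\otimes k}$ transports it to a map $L^{\otimes k}\to L$. A quick degree count gives $|\ell_k|=-1-1+k=k-2$, and the graded symmetry of $\delta_k$ in the suspended variables becomes graded skew-symmetry of $\ell_k$, since interchanging $sx_i$ and $sx_j$ costs the extra sign $(-1)^{(|x_i|+1)(|x_j|+1)}=-(-1)^{|x_i||x_j|}$ upon desuspending. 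This establishes the bijection on underlying maps; its content is that $\delta^2=0$ is equivalent to the generalized Jacobi identities. As $\delta^2$ is again a coderivation, it vanishes if and only if $\pi\circ\delta^2=0$, so I would evaluate $\pi\circ\delta^2$ on $\Lambda^n sL$. Using the comultiplication to split the inner leg of $\delta$ produces exactly the double sum over $i+j=n+1$ and over $(i,n-i)$-shuffles, and I would check that the d\'ecalage reproduces the Koszul signs $\varepsilon(\sigma)$, the signatures $\sgn(\sigma)$, and the factor $(-1)^{i(j-1)}$ appearing in the identity.

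Part (2) runs in parallel. A CDGC morphism $F\colon(\Lambda sL,\delta)\to(\Lambda sL',\delta')$ is determined by $\pi'\circ F$, whose arity components $F_k\colon\Lambda^k sL\to sL'$ have degree $0$; setting $f_k:=s^{-1}\circ F_k\circ s^{\otimes k}$ gives skew-symmetric maps $L^{\otimes k}\to L'$ of degree $-1+0+k=k-1$, as required. The compatibility $F\circ\delta=\delta'\circ F$ is an equation between two $F$-coderivations (coderivations twisted along the morphism $F$), each of which is determined by its projection to cogenerators; hence $F\delta=\delta'F$ if and only if $\pi'(F\delta-\delta'F)=0$. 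Evaluating the latter on $\Lambda^n sL$ and splitting via the comultiplications on source and target yields precisely the two sides of the $L_\infty$ morphism equation~\eqref{EcuLinf}, the sign $\varepsilon_k$ arising when the suspensions are moved past the factors $f_{i_l}$.

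The hard part will be entirely the sign bookkeeping in the d\'ecalage: confirming that graded cocommutative data on $sL$ translates into graded skew-symmetric brackets on $L$ with exactly the three signs in the Jacobi identity and the single sign $\varepsilon_k$ in the morphism equation. I would handle this by fixing the suspension isomorphism once and tracking a single general homogeneous monomial through each computation. The universal-property reductions above make the square-zero and chain-map conditions automatic once the cogenerator components are matched, so beyond these sign verifications nothing further is needed; for the cofreeness statements themselves I would cite the standard structure theory of conilpotent cocommutative coalgebras, as in \cite{Lad95}.
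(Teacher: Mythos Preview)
Your proposal is correct and follows essentially the same approach as the paper: both exploit the cofreeness of $\Lambda sL$ to reduce coderivations and coalgebra morphisms to their cogenerator projections, decompose these by arity, and transport via the d\'ecalage $\ell_k=s^{-1}\circ h_k\circ s^{\otimes k}$. The only difference is one of completeness: the paper explicitly calls its argument a ``sketch of proof for future reference'' and merely records the bijection on underlying maps (the formulas relating $h_k$ and $\ell_k$, and $(\pi f)^{(k)}$ and $f_k$), without verifying that $\delta^2=0$ matches the generalized Jacobi identities or that $F\delta=\delta'F$ matches equation~\eqref{EcuLinf}; your outline supplies precisely these missing steps via the observation that $\delta^2$ and $F\delta-\delta'F$ are again (twisted) coderivations and hence vanish iff their cogenerator projections do.
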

 
 \begin{proof} We give a sketch of proof for future reference.
 
 (1) A codifferential $\delta$ on $\Lambda sL$ is determined by a degree
$-1$ linear map $ \Lambda^+ sL\to sL$, written as the sum of
linear maps $h_k\colon\Lambda^ksL\to sL$, $k\ge 1$. In fact, $\delta$ is written as the sum of coderivations,
\begin{equation}\label{Coder1}
\delta=\sum_{k\ge 1}\delta_k,\quad \delta_k\colon \Lambda sL\to \Lambda sL,
\end{equation}
each of which is the extension as a coderivation of the corresponding $h_k$,
\begin{equation}\label{Coder2}
\delta_k\left(sx_1\wedge...\wedge sx_p\right)=\sum_{i_1<\dots<i_k}\varepsilon\, h_k\left(sx_{i_1}\wedge...\wedge sx_{i_k}\right)\wedge sx_1\wedge...\widehat{ sx}_{i_1}...\widehat{sx}_{i_k}...\wedge sx_p.
\end{equation}
Each $\delta_k$ decreases word length by $k-1$. The operators $\left\{\ell_k\right\}_{k\ge 1}$ on $L$ and the maps $\left\{h_k\right\}_{k\ge 1}$ (and hence $\delta$) uniquely determine  each other by:
\begin{equation}\label{CorresL}
\begin{split}
\ell_k&=s^{-1}\circ h_k\circ s^{\otimes k}\colon  L^{\otimes^k}\to
L,\\[0.2cm]
h_k&=(-1)^{\frac{k(k-1)}{2}}s\circ\ell_k\circ\left(s^{-1}\right)^{\otimes k}\colon\Lambda^ksL\to sL.
\end{split}
\end{equation}

(2) A CDGC morphism
$$f\colon (\Lambda sL,\delta)\longrightarrow(\Lambda sL',\delta')
$$
is determined by  $\pi
f\colon \Lambda sL\to sL'$ ($\pi$ denotes the projection onto the indecomposables) which can be written as
$ \sum_{k\ge 1}(\pi f)^{(k)}$, where $(\pi f)^{(k)}\colon \Lambda^ksL\to sL'$. Note that the collection of linear maps $\left\{(\pi f)^{(k)}\right\}_{k\ge 1}$ is in one-to-one correspondence with a system $\left\{f_{k}\right\}_{k\ge1}$ of
skew-symmetric maps $f_k\colon  L^{\otimes^k}\to L'$ of degree $1-k$ satisfying equations (\ref{EcuLinf}). Indeed, each $f_k$ and $(\pi f)^{(k)}$ determines the other by:
$$
\begin{aligned}
f_k&=s^{-1}\circ (\pi f)^{(k)}\circ s^{\otimes k},\\
(\pi f)^{(k)}&=(-1)^{\frac{k(k-1)}{2}}s\circ f_k\circ \left(s^{-1}\right)^{\otimes k}.\\
\end{aligned}
$$

\end{proof}

\begin{remark}\label{QuillenChains} \rm Whenever $L$ is a DGL with differential $\partial$, the corresponding CDGC $(\Lambda sL,\delta)$ given by (1) of Theorem \ref{linfinito} is precisely the classical construction of \cite{Qui69}, see also \cite[Chap. 22]{Yve12}. That is, $\delta=\delta_1+\delta_2$ where these are produced as before by,
$$
h_1(sx)=-s\partial x,\quad h_2(sx\wedge sy)=-(-1)^{|x|}s[x,y].
$$
 Thus,
from now on, given a general $L_\infty$ algebra $L$, we denote by $\mathscr C(L)=(\Lambda sL,\delta)$ the corresponding CDGC  and call it the {\em (Quillen) chains on $L$}.
\end{remark}

A DGL $L$ is \emph{formal} if there exists a zig-zag of DGL quasi-isomorphisms
\begin{center}
\begin{tikzcd}
L   & \cdots \ar[swap]{l}{\simeq} \ar{r}{\simeq} &  H,
\end{tikzcd} 
\end{center} where $H=H_*(L)$ is endowed with the trivial differential. The reader is referred to the monograph \cite{Yve12} for the notions of rational homotopy theory involved in this paper. A simply connected space $X$ is \emph{coformal} if the DGL $\lambda(X)$ given by Quillen's classical construction \cite{Qui69} is formal. In this case, $\pi_*\left(\Omega X\right)\otimes \Q$ endowed with the Samelson bracket is a DGL model of $X$.

\begin{proposition}  A DGL $L$ is formal if and only if it is $L_\infty$ quasi-isomorphic to a minimal $L_\infty$ algebra $L'$ for which $\ell_n=0$ for all $n\geq 3.$
\end{proposition}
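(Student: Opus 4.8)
The plan is to pass back and forth between DGL's and $L_\infty$ algebras using Theorem~\ref{linfinito}, with two standard inputs at hand: every $L_\infty$ quasi-isomorphism admits an $L_\infty$ quasi-isomorphism in the opposite direction, and Quillen's chain functor $\mathscr C$ of Remark~\ref{QuillenChains}, together with its cobar-type left adjoint $\mathscr L$, realizes the homotopy category of DGL's inside that of conilpotent CDGC's — in particular the counit $\mathscr L\mathscr C(L)\to L$ is a DGL quasi-isomorphism for every DGL $L$.

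For the implication ($\Rightarrow$), suppose $L$ is formal. Then $H=H_*(L)$, regarded as a DGL with zero differential and its natural graded Lie bracket, is already minimal as an $L_\infty$ algebra (since $\ell_1=0$) and has $\ell_n=0$ for all $n\ge 3$, so it is enough to produce an $L_\infty$ quasi-isomorphism relating $L$ and $H$. By hypothesis there is a finite zig-zag of DGL quasi-isomorphisms between $L$ and $H$; each of these is in particular an $L_\infty$ quasi-isomorphism with vanishing higher components, and inverting the wrong-way arrows and composing yields a single $L_\infty$ quasi-isomorphism. Taking $L'=H$ concludes this direction.

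For ($\Leftarrow$), let $\varphi$ be an $L_\infty$ quasi-isomorphism between $L$ and a minimal $L_\infty$ algebra $L'$ with $\ell_n=0$ for $n\ge 3$; reversing it if necessary we may assume $\varphi\colon L\to L'$. First note that $L'$ is genuinely a DGL with zero differential: minimality forces $\ell_1=0$, and the hypothesis kills all brackets of arity $\ge 3$. Next, the arity-$2$ instance of the $L_\infty$-morphism equations~\eqref{EcuLinf} shows that the homology isomorphism induced by $\varphi_1$ intertwines the brackets coming from $\ell_2$; hence it is an isomorphism of graded Lie algebras $H_*(L)\cong H_*(L')=L'$, so $L'\cong H$ as DGL's. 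It remains to connect $L$ and $L'$ by a zig-zag of DGL quasi-isomorphisms. By Theorem~\ref{linfinito}(2), $\varphi$ corresponds to a morphism of CDGC's $\mathscr C(\varphi)\colon\mathscr C(L)\to\mathscr C(L')$, and this is a quasi-isomorphism precisely because $\varphi_1$ is — one filters both chain coalgebras by word length and compares the induced maps on the associated graded. Applying $\mathscr L$ and splicing with the counit quasi-isomorphisms $\mathscr L\mathscr C(L)\to L$ and $\mathscr L\mathscr C(L')\to L'$ gives a zig-zag $L\leftarrow\mathscr L\mathscr C(L)\to\mathscr L\mathscr C(L')\to L'\cong H$ of DGL quasi-isomorphisms, so $L$ is formal.

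The computational parts — the arity-$2$ bracket identification and the word-length filtration argument for $\mathscr C(\varphi)$ — are routine. The genuine input, which I would isolate as a lemma or simply cite, is the dictionary between being ``$L_\infty$ quasi-isomorphic'' and being ``connected by a zig-zag of DGL quasi-isomorphisms''; concretely this rests on the invertibility of $L_\infty$ quasi-isomorphisms and on the counit of the $(\mathscr L,\mathscr C)$-adjunction being a quasi-isomorphism, both classical in Quillen's conilpotent framework (alternatively one may invoke the Quillen equivalence of homotopy categories directly, or phrase the argument through the minimal $L_\infty$ model of $L$ furnished by homotopy transfer).
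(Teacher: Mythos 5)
Your proof is correct and takes essentially the same route as the paper's: the forward direction composes the zig-zag using the invertibility of $L_\infty$ quasi-isomorphisms, and the converse rests on the equivalence between being $L_\infty$ quasi-isomorphic and being connected by a zig-zag of DGL quasi-isomorphisms, which the paper simply cites (Loday--Vallette, Thm.~11.4.14) and which you unpack via the bar--cobar rectification $L\leftarrow\mathscr L\mathscr C(L)\to\mathscr L\mathscr C(L')\to L'$. The additional details you supply --- identifying $L'\cong H_*(L)$ as graded Lie algebras via the arity-$2$ morphism equation, and the word-length filtration argument for $\mathscr C(\varphi)$ --- are sound and merely make explicit what the paper's citation hides.
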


Note that necessarily $L'=H(L)$.

\begin{proof}It is well  known, see for instance \cite[Theorem~4.6]{Kon03} or \cite[Section~10.4]{Lod12}, that if $f:M \to N$ is an $L_\infty$ quasi-isomorphism, then there exists another $L_\infty$ quasi-isomorphism $g:N\to M$ such that $$H(f_1)\colon H(M,\ell_1)\rightleftarrows H(N,\ell_1)\colon H(g_1)$$ are inverses of each other. Hence, being quasi-isomorphic is a well defined concept for a pair of $L_\infty$ algebras. In particular, if both $M$ and $N$ are DGL's, a zig-zag
\begin{center}
\begin{tikzcd}
M  & \cdots \ar[swap]{l}{\simeq} \ar{r}{\simeq} &  N
\end{tikzcd} 
\end{center}
of DGL quasi-isomorphism exists if and only if $M$ and $N$ are quasi-isomorphic as $L_\infty$ algebras, that is, if there are $L_\infty$ quasi-isomorphisms $f\colon M\rightleftarrows N\colon g$ as before. 

If $L$ is formal, then the above discussion implies that there are $L_\infty$ quasi-isomorphisms
\begin{center}
\begin{tikzcd}
F:L  \ar[shift left]{r}{} & H \ar[shift left]{l}{}:G.
\end{tikzcd}
\end{center}
The converse follows from the fact that an $L_\infty$ quasi-isomorphism between two DGL's is equivalent to a zig-zag of DGL quasi-isomorphisms between them \cite[Thm. 11.4.14]{Lod12}.
\end{proof}

We finish the Introduction by extending to any DGL a well known fact on rational homotopy theory for reduced DGL's with finite type homology. Let $L$ be a reduced finite type DGL which is $L_\infty$ quasi-isomorphic to a minimal $L_\infty$ algebra with vanishing $\ell_k$, for $k\ge 3$. By Theorem \ref{linfinito}, there is a CDGC quasi-isomorphism
$$
(\Lambda sL,\delta) \stackrel{\simeq}{\longrightarrow}(\Lambda sH,\delta).
$$
Dualizing, we obtain a CDGA quasi-isomorphism,
$$
(\Lambda (sH)^\sharp,d)
\stackrel{\simeq}{\longrightarrow} {\mathscr C}^*(L)
$$
where, on the right, we have the classical cochains on $L$, dual of its chains (see Remark \ref{QuillenChains}) and on the left, we use the  isomorphism $(\Lambda sH,\delta)^\sharp\cong(\Lambda (sH)^\sharp,d)$ \cite[Lemma 23.1]{Yve12}. For each $k\ge 2$, the bracket $\ell_k$ on $H$ (which defines $\delta$ and vanishes for $k\ge 3$) is identified with the $k$th part $d_k$ of the differential $d$  by the formula (\ref{pairing}), see Section \ref{SullivanLInfinity}. In particular, $d$ is decomposable and just quadratic, that is, $(\Lambda (sH)^\sharp,d)$ is a Sullivan minimal model with quadratic differential (see \cite[\S 12]{Yve12}). Thus, Theorem \ref{linfinito} implies the following well known fact (\cite[II.7(6)]{Tan83}), which will be used in Section \ref{ssQuillen}.

\begin{corollary}\label{SulCuad} A finite type reduced DGL $L$ is formal if and only if the Sullivan minimal model on the  cochains  of $L$ admits a quadratic differential. In particular, if $X$ is a simply connected finite type CW-complex and $L$ is a finite type DGL model of $X$, then $X$ is coformal if and only if there is a purely quadratic differential on its Sullivan minimal model. 
\end{corollary}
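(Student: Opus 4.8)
The plan is to read both implications off Theorem~\ref{linfinito}, the Proposition above characterising formality of a DGL, and the dualization discussion carried out in the paragraph immediately preceding the statement, the only substantial external input being that Quillen's chains construction reflects quasi-isomorphisms in the reduced finite type range. For the \emph{only if} direction, if $L$ is formal then the Proposition provides an $L_\infty$-quasi-isomorphism between $L$ and the minimal $L_\infty$ algebra $H=H_*(L)$ with $\ell_k=0$ for $k\ge 3$; this is exactly the situation analysed in the paragraph before the statement, whose conclusion is that $(\Lambda(sH)^\sharp,d)\xrightarrow{\ \simeq\ }\mathscr{C}^*(L)$ is a \emph{minimal} Sullivan model (the generators $(sH)^\sharp$ being of finite type and concentrated in degrees $\ge 2$) whose differential, having $k$-th component identified with $\ell_k$, is purely quadratic because $\ell_k=0$ for $k\ge 3$. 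Since $\mathscr{C}^*(L)$ is a simply connected finite type CDGA, its minimal Sullivan model is unique up to isomorphism, so it admits a purely quadratic differential.

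For the \emph{if} direction, suppose the minimal Sullivan model of $\mathscr{C}^*(L)$ is isomorphic to $(\Lambda W,d)$ with $d=d_2$ purely quadratic, where $W$ is of finite type and concentrated in degrees $\ge 2$. Dualizing gives a CDGC $(\Lambda W^\sharp,\delta)\cong(\Lambda W,d)^\sharp$ whose codifferential $\delta$ has a single component, the one lowering word length by one; by part~(1) of Theorem~\ref{linfinito} and the correspondence~\eqref{CorresL}, $(\Lambda W^\sharp,\delta)=\mathscr{C}(H')$ for $H':=s^{-1}W^\sharp$ equipped with $\ell_1=0$ and $\ell_k=0$ for $k\ge 3$, that is, a graded Lie algebra with zero differential. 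Dualizing the quasi-isomorphism $(\Lambda W,d)\xrightarrow{\ \simeq\ }\mathscr{C}^*(L)$ yields a CDGC quasi-isomorphism $\mathscr{C}(L)\xrightarrow{\ \simeq\ }\mathscr{C}(H')$. Invoking that Quillen's construction induces an equivalence of the rational homotopy categories of reduced finite type DGL's and of the corresponding cocommutative coalgebras (\cite{Qui69}; equivalently, the chains functor reflects quasi-isomorphisms, cf.\ \cite{Lod12}), I conclude that $L$ and $H'$ are joined by a zig-zag of DGL quasi-isomorphisms. Since $H'$ has zero differential, such a zig-zag induces an isomorphism of graded Lie algebras $H_*(L)\cong H'$ and hence extends to a zig-zag $L\simeq H'\cong(H_*(L),0)$; that is, $L$ is formal. (Equivalently: by Theorem~\ref{linfinito}(2) and the same reflection property the CDGC quasi-isomorphism corresponds to an $L_\infty$-quasi-isomorphism $L\to H'$, and one then invokes the Proposition, $H'$ being minimal with $\ell_{\ge 3}=0$.)

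For the statement about spaces, if $X$ is simply connected of finite type and $L$ is a finite type DGL model of $X$, then $L$ and Quillen's model $\lambda(X)$ are joined by a zig-zag of DGL quasi-isomorphisms, so $X$ is coformal exactly when $L$ is formal; moreover $\mathscr{C}^*$ takes DGL quasi-isomorphisms to CDGA quasi-isomorphisms (immediate from the word-length filtration and linear duality), whence $\mathscr{C}^*(L)\simeq\mathscr{C}^*(\lambda(X))\simeq A_{PL}(X)$ and the minimal Sullivan model of $\mathscr{C}^*(L)$ coincides with the one of $X$. Combined with the first part, this gives the claim. The step I expect to be the genuine obstacle is the \emph{if} direction, precisely the passage back from a quasi-isomorphism of Quillen chains to a quasi-isomorphism of DGL's (the reflection property); the remainder is bookkeeping with Theorem~\ref{linfinito} and with linear duality in the finite type range.
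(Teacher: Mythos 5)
Your argument is correct and follows essentially the same route as the paper: the \emph{only if} direction is exactly the dualization discussion in the paragraph preceding the statement, and the \emph{if} direction is the standard converse via Quillen's equivalence (reflection of quasi-isomorphisms by the chains functor in the reduced finite type range), which the paper does not spell out but delegates to the citation \cite[II.7(6)]{Tan83}. You correctly identify that this reflection property is the only nontrivial input; everything else is the bookkeeping with Theorem \ref{linfinito} and linear duality that the paper already records.
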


It follows that, in order to detect non-formality of a DGL $L$ (respectively, non-coformality of a space $X$), it is \emph{not} enough to find a non-vanishing bracket $\ell_n$ of order greater or equal than $3$ in a particular minimal $L_\infty$ algebra quasi-isomorphic to $L$  (respectively, to a Lie model of $X$).

\section{Formality and higher Whitehead brackets}\label{Criterios}

In this section, we prove Theorem \ref{TeoA}, providing two criteria for discarding the formality of a DGL $L$. In particular, whenever $L$ is reduced, these criteria allow for discarding the coformality of simply connected spaces. These criteria are given in terms of the Lie analogs of the Massey products of a differential graded algebra (\cite{Mas58}). We call these algebraic higher products \emph{higher (order) Whitehead brackets}, see Remark \ref{Remars}. These are also known as \emph{Massey brackets}, or \emph{Massey-Lie brackets}. The higher Whitehead brackets live in the homology of DGL's. We will introduce these higher products by means of a very special DGL. Since no connectivity restrictions are imposed on the DGL's, we need to explain some facts of homotopical algebra. After the proof of Theorem \ref{TeoA}, which is Theorem \ref{teorema} in this section, we give examples of its application to rational homotopy theory.\\

For $k\geq 2$, fix integers $n_1,...,n_k$. Let $\Lie(U)$ be the free graded Lie algebra with 
\begin{equation*}
U = \left\langle u_{i_1...i_s} \mid 1 \leq i_1 < \cdots < i_k  \leq k, \ 1 \leq s < k\right\rangle,
\end{equation*} of degrees  $|u_{i_1...i_s}|=n_{i_1}+\cdots +n_{i_s}-1$ and differential $\partial$ for which $u_1,...,u_k$ are cycles, and for $s \geq 2$,
\begin{equation}\label{Diferencial}
\partial u_{i_1...i_s} = \sum_{p=1}^{s-1} \sum_{\sigma\in \widetilde S(p,n-p)} \varepsilon(\sigma) \left[u_{i_{\sigma(1)}...i_{\sigma(p)}},u_{i_{\sigma(p+1)}...i_{\sigma(s)}}\right].
\end{equation} Here, $\widetilde S(p,n-p)$ are the $(p,n-p)$ shuffles fixing $1$, and $\varepsilon(\sigma)$ is given by the Koszul sign times the parity of $|u_{i_{\sigma(1)}...i_{\sigma(s)}}|.$ Let $L$ be a DGL, and let $x_i\in H_{n_i}(L)$ for $1\leq i \leq k$. Define $\varphi:\Lie\left(u_1,...,u_k\right) \to L$ by mapping $u_i$ to a representative $\varphi(u_i)$ of $x_i$. Denote $N=n_1+\cdots + n_k$. The \emph{$k$th order Whitehead bracket set} is the (possibly empty) set $$\left[x_1,...,x_k\right]=\left\{\overline{\phi(w)} \mid \phi:\Lie\left(U\right)\to L \ \textrm{ is a DGL extension of } \varphi\right\}\subseteq H_{N+1}(L),$$ as depicted by the diagram:
\begin{equation} \label{ProdLie}
\begin{tikzcd}
 \Lie\left(u_1,...,u_k\right) \ar{r}{\varphi} \ar[hook]{d}& L \\
 \Lie\left(U\right) \ar[dotted, swap]{ur}{\phi}
\end{tikzcd}  
\end{equation}

\begin{remark}\label{Remars}\rm Whenever $L$ is reduced, $\Lie(U)$ above is the Lie model of a fat wedge of spheres, and there is a bijection between the rational topological higher order Whitehead products of a simply connected space $X$ and the higher order Whitehead brackets of a Quillen model of it (\cite{Tan83}).
\end{remark}

Recall, see for instance \cite[\S2]{Hin97}, that the category of DGL's admits a model category structure in which fibrations are the surjective morphisms, weak equivalences are the quasi-isomorphisms and cofibrations are the morphisms satisfying the left lifting property with respect to trivial fibrations.
The proof of the well known {\em Lifting Lemma} \cite[Theorem II.5. (13)]{Tan83} to surjective quasi-isomorphisms works in this case to prove:

\begin{lemma}\label{cofib}  The DGL $\Lie(U)$ is cofibrant. That is, given a trivial fibration, i.e., a surjective quasi-isomorphism
 $ \varphi:L\stackrel{\simeq}{\twoheadrightarrow} L'$, any morphism $\eta \colon \Lie(U)\to L'$ lifts to  a DGL morphism  $\theta \colon \Lie(U)\to L$ such that $\eta = \varphi \theta$.
\end{lemma}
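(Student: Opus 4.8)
The plan is to realize $\Lie(U)$ as a standard (quasi-free) cofibrant DGL by filtering its generators according to their number of indices, and then to prove the lifting statement by the obstruction-theoretic induction used in the proof of the Lifting Lemma \cite[Theorem II.5.(13)]{Tan83}, noting that no connectivity hypothesis is needed once one works inside the model structure of \cite{Hin97} on all DGL's.

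First I would record that, since $\Lie(U)$ is free as a graded Lie algebra on $U$, a DGL morphism out of it is the same as a degree-zero linear map $U\to L$ that commutes with the differentials (by induction on bracket length it is enough to check this on generators); hence it suffices to build $\theta$ on the generators $u_{i_1\ldots i_s}$, compatibly with $\partial$. Filter $U$ by the number of indices: let $U(s)\subseteq U$ be spanned by those $u_{i_1\ldots i_t}$ with $t\le s$, so that $0=U(0)\subseteq U(1)\subseteq\cdots\subseteq U(k-1)=U$. Formula (\ref{Diferencial}) says precisely that $u_1,\dots,u_k$ are cycles and that $\partial\bigl(U(s)\bigr)\subseteq\Lie\bigl(U(s-1)\bigr)$ for $s\ge 2$; moreover $\partial(\partial w)=0$, so each $\partial w$ is a cycle of $\Lie(U(s-1))$. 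Thus $\Lie(U)=\bigcup_s\Lie(U(s))$ is obtained from $0$ by successively attaching, at stage $s$, the free generators of $U(s)/U(s-1)$ along a map into the cycles of $\Lie(U(s-1))$. Each such step is a pushout of a generating cofibration, so $0\to\Lie(U)$ is a cofibration and $\Lie(U)$ is cofibrant; the lifting property against the trivial fibration $\varphi$ is then automatic, but I would spell out the induction because the hypotheses here are slightly non-standard.

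For the induction, recall that a surjective quasi-isomorphism $\varphi\colon L\twoheadrightarrow L'$ has acyclic kernel $K=\Ker\varphi$: the long exact homology sequence of $0\to K\to L\to L'\to 0$ forces $H_*(K)=0$. Suppose $\theta$ has been constructed as a DGL map $\Lie(U(s-1))\to L$ with $\varphi\theta=\eta$ on $\Lie(U(s-1))$, where $U(0)=0$ so that the case $s=1$ starts from the zero map. Given a generator $w=u_{i_1\ldots i_s}$, the element $\theta(\partial w)\in L$ makes sense because $\partial w\in\Lie(U(s-1))$; it is a cycle, since $\partial\theta(\partial w)=\theta(\partial^2 w)=0$, and $\varphi\theta(\partial w)=\eta(\partial w)=\partial\eta(w)$. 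Using surjectivity of $\varphi$, choose $z\in L$ with $\varphi(z)=\eta(w)$. Then $\partial z-\theta(\partial w)$ is a cycle mapping to $0$ under $\varphi$, hence lies in $K$, so $\partial z-\theta(\partial w)=\partial c$ for some $c\in K$ by acyclicity of $K$; setting $\theta(w)=z-c$ yields $\varphi\theta(w)=\eta(w)$ and $\partial\theta(w)=\theta(\partial w)$. This extends $\theta$ over $\Lie(U(s))$, and after $k-1$ stages we obtain the desired lift $\theta\colon\Lie(U)\to L$ with $\eta=\varphi\theta$.

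I do not expect a genuine obstacle. The only points requiring care are the two ingredients that make the induction run: the acyclicity of the kernel of a surjective quasi-isomorphism, needed to solve $\partial z-\theta(\partial w)=\partial c$ inside $K$; and the verification, read directly off (\ref{Diferencial}), that $\partial$ strictly lowers the index-count filtration, so that at each stage the new generators are attached along a map into the free Lie algebra on the generators already treated. The lack of connectivity assumptions is absorbed entirely into the use of the model structure of \cite{Hin97}.
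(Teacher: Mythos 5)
Your proof is correct and is essentially the argument the paper intends: the paper simply cites the proof of the Lifting Lemma \cite[Theorem II.5.(13)]{Tan83}, and your write-up is exactly that induction — filtering the generators by index count, using that $\partial$ strictly lowers the filtration, and solving the lifting problem at each stage via the acyclicity of $\Ker\varphi$. Nothing further is needed.
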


In particular, a well known general fact on model categories shows the following.

\begin{lemma}\label{Biy} Given a (non necessarily surjective) DGL quasi-isomorphism $\varphi\colon L\xrightarrow{\simeq} L'$,  composition with  $\varphi$ induces a bijection between the sets of homotopy classes,
$$
\varphi_\sharp \colon \bigl[ \Lie(U),L \bigr]\stackrel{\cong}{\to}\bigl[ \Lie(U),L' \bigr].
$$
\end{lemma}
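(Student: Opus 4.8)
The plan is to deduce Lemma \ref{Biy} from Lemma \ref{cofib} via the standard cofibrant-replacement / factorization machinery of the DGL model category. First I would recall that in any model category, if $K$ is a cofibrant object and $\varphi\colon L\xrightarrow{\simeq} L'$ is a weak equivalence, then $\varphi_\sharp\colon [K,L]\to [K,L']$ is a bijection; Lemma \ref{cofib} provides exactly the hypothesis that $K=\Lie(U)$ is cofibrant (indeed cofibrant objects can be characterized as those having the left lifting property against all trivial fibrations, which is what Lemma \ref{cofib} asserts), so the lemma is a special case of this general fact. To keep the proof self-contained I would nonetheless sketch the argument rather than merely cite it.

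The key steps, in order, are: (i) Using the model structure, factor $\varphi$ as a composite $L\xhookrightarrow{\ j\ }\widetilde L\xtwoheadrightarrow{\ p\ }L'$ of a trivial cofibration $j$ followed by a fibration $p$; since $\varphi=p\circ j$ is a quasi-isomorphism and $j$ is one, two-out-of-three forces $p$ to be a quasi-isomorphism as well, hence a trivial fibration. (ii) Because $j$ is a weak equivalence, $j_\sharp\colon[\Lie(U),L]\to[\Lie(U),\widetilde L]$ is a bijection — here one can invoke that weak equivalences induce bijections on homotopy classes out of any object once one has a good cylinder, or more simply reduce to the surjective case by the same factorization trick applied to $j$; either way this is routine. (iii) For the trivial fibration $p$, apply Lemma \ref{cofib}: surjectivity of $p_\sharp$ on homotopy classes is the lifting statement of Lemma \ref{cofib} directly, and injectivity follows by applying the same lifting property to the trivial fibration $\widetilde L^{I}\twoheadrightarrow \widetilde L\times_{L'}\widetilde L$ obtained from a path object of $\widetilde L$ over $L'$ (a homotopy in $L'$ between $p\theta_0$ and $p\theta_1$ lifts to a homotopy in $\widetilde L$ between $\theta_0$ and $\theta_1$). (iv) Compose the bijections from (ii) and (iii) to conclude $\varphi_\sharp=p_\sharp\circ j_\sharp$ is a bijection.

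I would also note that since $\Lie(U)$ is cofibrant and every DGL is fibrant in Hinich's model structure, the homotopy relation on $[\Lie(U),L]$ is genuine (an equivalence relation), so the sets in the statement are well defined and the displayed map makes sense; this is needed to even phrase the conclusion. The main obstacle — really the only nontrivial point — is step (iii), the injectivity of $p_\sharp$: one must produce a path object (cylinder) for a DGL over $L'$ and verify that the relevant map is a trivial fibration so that Lemma \ref{cofib}'s lifting property applies. This is entirely standard (it is the argument that a trivial fibration induces bijections on homotopy classes out of a cofibrant object, as in Quillen's original treatment), but it is where all the homotopical algebra actually happens; everything else is formal factorization and two-out-of-three bookkeeping. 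In the write-up I would compress (iii) to a reference to the general model-categorical statement, citing e.g. \cite{Hin97} or a standard text, exactly as the excerpt's phrasing "a well known general fact on model categories shows the following" anticipates.
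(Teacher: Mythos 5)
Your proposal is correct and follows exactly the route the paper intends: the paper proves this lemma simply by invoking ``a well known general fact on model categories'' (that a weak equivalence between fibrant objects induces a bijection on homotopy classes of maps out of a cofibrant object, with Lemma \ref{cofib} supplying cofibrancy of $\Lie(U)$ and fibrancy being automatic since fibrations are the surjections), and your steps (i)--(iv) are precisely the standard proof of that general fact. No discrepancy to report.
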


In this model structure, a {\em path object} for a given DGL $L$ is the DGL
$
L\otimes \Lambda (t,dt)$ in which $|t|=0$, together with morphisms
$$
\xymatrix{ L\otimes \Lambda (t,dt) \ar@<0.75ex>[r]^-{\varepsilon_0} &L, \ar@<0.75ex>@{<-}[l]^(0.32){\varepsilon_1} }
$$
defined as the identity on $L$ and $\varepsilon_0(t)=0$, $\varepsilon_1(t)=1$.
Then, two morphisms $f,g\colon \Lie(U)\to L$ are homotopic if there is a morphism $\Psi\colon \Lie(U)\to L\otimes \Lambda (t,dt)$ such that $\varepsilon_0\Psi=f$ and $\varepsilon_1\Psi=g$. In particular, as $\Lambda (t,dt)$ is acyclic,  two homotopic morphisms induce the same morphism on homology. Denote by $f_*=H_*(f)$ the map induced in homology by a DGL morphism $f$.

\begin{proposition}\label{invarianza}  Let $\varphi : L\xrightarrow{\simeq }L'$ be a  DGL quasi-isomorphism and let  $x_1,\dots ,x_k\in H_*(L)$, $k\ge 2$. Then:
\begin{enumerate}[{(1)}]
\item $0\in [x_1, \dots , x_k]$  if and only if  \ $0\in \left[\varphi_*(x_1),\dots ,\varphi_* (x_k)\right]$.
\item The map $\varphi_*$ induces a bijection between the higher order Whitehead brackets sets. In particular,
\begin{equation*}
\# [x_1, \dots , x_k]=\#\left[\varphi_*(x_1),\dots ,\varphi_* (x_k)\right].
\end{equation*}
\end{enumerate}
\end{proposition}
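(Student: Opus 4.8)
The plan is to reduce everything to the lifting machinery already developed. First I would recall the setup: the higher Whitehead bracket set $[x_1,\dots,x_k]$ is, by definition, the image under $H_*$ of the set of DGL extensions $\phi:\Lie(U)\to L$ of the fixed map $\varphi^0:\Lie(u_1,\dots,u_k)\to L$ sending $u_i$ to a chosen representative of $x_i$; equivalently, after passing to homotopy classes, it is governed by the set $[\Lie(U),L]$ rel the subcomplex $\Lie(u_1,\dots,u_k)$. The quasi-isomorphism $\varphi:L\xrightarrow{\simeq}L'$ takes representatives of $x_i$ to representatives of $\varphi_*(x_i)$, so it sends extensions $\phi$ of $\varphi^0$ to extensions $\varphi\circ\phi$ of the corresponding map into $L'$. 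This already gives one containment $\varphi_*[x_1,\dots,x_k]\subseteq[\varphi_*(x_1),\dots,\varphi_*(x_k)]$, and similarly for the ``$0\in$'' statement the forward implication of (1) is immediate.

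For the reverse directions, I would use the standard trick of factoring $\varphi$ through a path object or, more directly, invoking Lemma~\ref{cofib} and Lemma~\ref{Biy}. The clean route: since $\Lie(U)$ is cofibrant (Lemma~\ref{cofib}), and $\Lie(u_1,\dots,u_k)\hookrightarrow\Lie(U)$ is a cofibration between cofibrant objects, the induced map on relative homotopy classes of extensions is a bijection when we post-compose with a weak equivalence. Concretely, given an extension $\psi:\Lie(U)\to L'$ of the $L'$-version of $\varphi^0$, I want to lift it through $\varphi$ to an extension $\phi:\Lie(U)\to L$ which still extends $\varphi^0$ itself (not merely a homotopic map). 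If $\varphi$ were a surjective quasi-isomorphism this is exactly Lemma~\ref{cofib}; in general one factors $\varphi$ as a trivial cofibration followed by a trivial fibration, or replaces $L'$ by the mapping path object, and then the lift exists up to homotopy. The point that needs care is that the lift should restrict correctly on $\Lie(u_1,\dots,u_k)$: one arranges this by first choosing the lift on generators $u_1,\dots,u_k$ to be the prescribed representatives (possible since $\varphi$ is surjective on cycles up to boundaries), and then extending over the rest of $U$ using the lifting property, exactly as in the proof of the Lifting Lemma in \cite{Tan83}. Passing to homology then gives $\varphi_*\overline{\phi(w)}=\overline{\psi(w)}$, yielding surjectivity of $\varphi_*$ on the bracket sets.

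Injectivity is the step I expect to be the main obstacle, and it is where Lemma~\ref{Biy} does the real work. Suppose $\phi_1,\phi_2:\Lie(U)\to L$ are two extensions of $\varphi^0$ with $\varphi_*\overline{\phi_1(w)}=\varphi_*\overline{\phi_2(w)}$; I must show $\overline{\phi_1(w)}=\overline{\phi_2(w)}$ in $H_{N+1}(L)$. Since $\varphi_*$ is injective on all of $H_*(L)$ (it is a quasi-isomorphism), this is actually immediate — the subtlety is only in the surjectivity/non-emptiness bookkeeping, not here. So in fact injectivity of $\varphi_*$ on the bracket sets is inherited for free from injectivity of $\varphi_*$ on homology, once surjectivity onto the target bracket set is established. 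Assembling: $\varphi_*$ restricted to $[x_1,\dots,x_k]$ is injective because $\varphi_*$ is globally injective, and surjective onto $[\varphi_*(x_1),\dots,\varphi_*(x_k)]$ by the lifting argument; hence it is a bijection, giving (2), and the cardinality statement follows. For (1), the forward implication is the trivial containment noted above, and the backward implication is the $w$-component of the same lifting argument applied to a zero extension: if $0\in[\varphi_*(x_1),\dots,\varphi_*(x_k)]$, lift the corresponding extension to $L$ to produce an extension $\phi$ with $\varphi_*\overline{\phi(w)}=0$, hence $\overline{\phi(w)}=0$ by injectivity, so $0\in[x_1,\dots,x_k]$.

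I would write the argument so that the only genuinely technical input is the refined Lifting Lemma (lift an extension along a quasi-isomorphism while keeping the prescribed values on $u_1,\dots,u_k$), which I would state as a short sub-claim and prove by factoring $\varphi$ and applying Lemma~\ref{cofib} together with the path-object description of homotopy already recalled in the text. Everything else is formal manipulation of the definition of $[x_1,\dots,x_k]$ as the $H_*$-image of a lifting set, combined with the fact that a quasi-isomorphism is an isomorphism on homology. The main obstacle, to reiterate, is bookkeeping: ensuring that lifts and homotopies respect the fixed boundary data on the sub-DGL $\Lie(u_1,\dots,u_k)$, so that one genuinely compares the same bracket sets rather than merely homotopic perturbations of them.
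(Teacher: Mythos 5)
Your proposal is correct and follows essentially the same route as the paper: the forward containment by post-composition with $\varphi$, the reverse direction by lifting extensions of $\Lie(U)$ through $\varphi$ using its cofibrancy (Lemmas \ref{cofib} and \ref{Biy}), and injectivity on the bracket sets obtained for free from $\varphi_*$ being an isomorphism on homology. The only difference is that you spell out the relative bookkeeping (making the lift restrict to the prescribed representatives on $\Lie(u_1,\dots,u_k)$) that the paper's proof leaves implicit.
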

\begin{proof} First, observe that $[x_1, \dots , x_k]$ is non empty if and only if $[\varphi_*(x_1),\dots ,\varphi_* (x_k)]$ is also non empty. The result is now a straightforward consequence of lemmas \ref{cofib} and \ref{Biy}. Indeed, given $\overline{\phi(w)}\in [x_1,...,x_k]$, the fact that $\varphi_*$ is an isomorphism implies that $\overline{\varphi\circ \phi (w)}\in \left[\varphi_*(x_1),\dots ,\varphi_* (x_k)\right]$ for a unique homology class. And conversely, given $\overline{\phi'(w)}\in \left[\varphi_*(x_1),\dots ,\varphi_* (x_k)\right]$, there exists a unique (up to homotopy) DGL morphism $\theta:\Lie(U)\to L$ such that $\varphi\circ\theta \simeq \phi'$, which in particular implies that $\overline{\theta(w)}\in[x_1,...,x_k]$ is unique with the property that $\overline{\varphi\circ\theta(w)}=\overline{\phi'(w)}.$
\end{proof}

\begin{lemma}\label{lema1}  Let $L$ be a DGL with zero differential and let $x_1, \dots , x_k\in L$. If  $[ x_1,\dots , x_k]\not= \emptyset$, then $0\in [ x_1,\dots , x_k]$.
\end{lemma}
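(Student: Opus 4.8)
The plan is to construct, explicitly, a DGL morphism $\phi\colon\Lie(U)\to L$ extending $\varphi$ with $\phi(w)=0$. Since $L$ carries the zero differential, $H_*(L)=L$ and $\overline y=y$ for every $y\in L$; hence any such $\phi$ exhibits $0=\overline{\phi(w)}$ as an element of $[x_1,\dots,x_k]$, which is the asserted conclusion.

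First I would extract from the hypothesis the single input I need. As $[x_1,\dots,x_k]\neq\emptyset$, there is \emph{at least one} DGL morphism $\phi_0\colon\Lie(U)\to L$ extending $\varphi$, where $\varphi(u_i)=x_i$ (a representative of $x_i$ being $x_i$ itself, since the differential of $L$ vanishes). Being a chain map into a complex with zero differential, $\phi_0$ annihilates $\partial\bigl(\Lie(U)\bigr)$; evaluating on the generators $u_{ij}$ and using that $\partial u_{ij}=\pm[u_i,u_j]$ by \eqref{Diferencial}, we get $0=\phi_0(\partial u_{ij})=\pm[x_i,x_j]$, so
\[
[x_i,x_j]=0\qquad\text{for all }1\le i<j\le k .
\]
In other words, the nonemptiness of the bracket set already forces every two-fold bracket of the $x_i$ to vanish.

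Next I would let $\phi\colon\Lie(U)\to L$ be the graded Lie algebra morphism determined by $\phi(u_i)=x_i$ for $1\le i\le k$ and $\phi(u_A)=0$ whenever $|A|\ge 2$; this is well defined since $\Lie(U)$ is free on $U$. To check that $\phi$ is a morphism of DGL's it suffices to verify $\phi\circ\partial=0$ on generators: for $|A|=1$ this is clear ($\partial u_i=0$); for $|A|=2$ it is the relation $\pm[x_i,x_j]=0$ just obtained; and for $|A|\ge 3$, each bracket $[u_B,u_C]$ occurring in $\partial u_A$ (see \eqref{Diferencial}) has $B\sqcup C=A$ with $B,C$ nonempty, hence $\max(|B|,|C|)\ge 2$, so $\phi$ kills one of the two arguments and $\phi([u_B,u_C])=0$. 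Thus $\phi$ extends $\varphi$. The same computation disposes of $w$: by construction $w$ is the expression of \eqref{Diferencial} for the full index set $\{1,\dots,k\}$, i.e.\ a sum of brackets $[u_B,u_C]$ with $B\sqcup C=\{1,\dots,k\}$ and $B,C$ nonempty proper subsets; since $|B|+|C|=k$ with $|B|,|C|\ge 1$, at least one of $B,C$ has size $\ge 2$ (here one uses $k\ge 3$), and $\phi$ annihilates the corresponding generator, so $\phi(w)=0$. Therefore $0=\overline{\phi(w)}\in[x_1,\dots,x_k]$.

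The step I expect to carry the actual content is the first one: recognizing that the mere existence of \emph{some} extension already pins down $[x_i,x_j]=0$ for every $i<j$. This is exactly what makes the crude assignment ``collapse every decomposable generator $u_A$, $|A|\ge 2$, to zero'' compatible with the differential while at the same time annihilating $w$. Everything else is bookkeeping of which brackets appear in \eqref{Diferencial} together with the accompanying Koszul signs, where I anticipate no real difficulty.
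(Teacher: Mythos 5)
Your proof is correct and follows essentially the same route as the paper: extend $\varphi$ by sending every generator $u_A$ with $|A|\ge 2$ to zero and observe that this kills $w$. The one place you go beyond the paper's write-up is the $s=2$ case of the chain-map check, where you explicitly derive $[x_i,x_j]=0$ from the nonemptiness hypothesis; the paper's displayed computation ends in ``$=0$'' without comment, so your version makes visible exactly where that hypothesis is used.
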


\begin{proof}Consider the diagram (6), where $\varphi(u_i)$ is a representative of each $x_i,$ and define $$\phi\left(u_{i_1...i_s}\right)=0 \quad \forall \ 1 \leq i_1 < \cdots < i_s \leq k, \quad 2\leq s\leq k-1.$$ Then, $\partial \phi \left(u_{i_1...i_s}\right)= \partial 0 = 0$, and
\begin{align*}
\phi\partial  \left(u_{i_1...i_s}\right) &= \phi \left(\sum_{p=1}^{s-1}\sum_{\widetilde S(p,s-p)} \varepsilon(\sigma) \left[ u_{i_{\sigma(1)}...i_{\sigma(p)}}, u_{i_{\sigma(p+1)}...i_{\sigma(s)}} \right]\right)\\
 &= \sum_{p=1}^{s-1}\sum_{\widetilde S(p,s-p)} \varepsilon(\sigma) \left[ \phi\left(u_{i_{\sigma(1)}...i_{\sigma(p)}}\right), \phi\left(u_{i_{\sigma(p+1)}...i_{\sigma(s)}}\right) \right]=0.
\end{align*} Since $\phi(w)=0,$ the result follows.
\end{proof}

The following is an immediate consequence of Proposition \ref{invarianza} and Lemma \ref{lema1}:

\begin{theorem}\label{teorema} Let $L$ be a DGL and let $x_1,\dots , x_k\in H=H_*(L)$ be such that $[x_1,\dots,x_k]$ is non empty. Denote by $[-,\dots,- ]'$ the higher order Whitehead brackets of $H$. Then, $L$ is not formal if one of the following conditions hold:
\begin{enumerate}[{(1)}]
\item $0\notin [x_1, \dots , x_k]$.
\item The sets $[x_1, \dots , x_k]$ and $[x_1, \dots , x_k]'$ are not bijective.
\end{enumerate}
\end{theorem}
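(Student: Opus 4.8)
The plan is to deduce Theorem \ref{teorema} directly from the two results that immediately precede it, namely Proposition \ref{invarianza} and Lemma \ref{lema1}, together with the definition of formality. The overall strategy is contrapositive: I assume $L$ is formal and show that neither condition (1) nor condition (2) can hold, i.e., that $0\in[x_1,\dots,x_k]$ and that $[x_1,\dots,x_k]$ and $[x_1,\dots,x_k]'$ are in bijection.

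First I would unwind the hypothesis of formality. By definition, $L$ formal means there is a zig-zag of DGL quasi-isomorphisms connecting $L$ to $H=H_*(L)$ equipped with the zero differential. The key point is that Proposition \ref{invarianza} is stated for a \emph{single} DGL quasi-isomorphism $\varphi\colon L\xrightarrow{\simeq}L'$, but both of its conclusions (membership of $0$ in the bracket set, and the induced bijection of bracket sets via $\varphi_*$) are manifestly preserved under composition and are symmetric in $L$ and $L'$: one just iterates along the zig-zag. So I would first record the routine observation that Proposition \ref{invarianza} upgrades to: if $L$ and $L'$ are connected by a zig-zag of DGL quasi-isomorphisms inducing an isomorphism $\psi_*\colon H_*(L)\xrightarrow{\cong}H_*(L')$ on homology, then $0\in[x_1,\dots,x_k]$ iff $0\in[\psi_*(x_1),\dots,\psi_*(x_k)]$, and $\psi_*$ induces a bijection between the two Whitehead bracket sets (in particular they have the same cardinality). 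Applying this to the zig-zag witnessing formality, with $L'=H$ and $\psi_*=\id$, gives $0\in[x_1,\dots,x_k]$ (iff $0\in[x_1,\dots,x_k]'$) and a bijection $[x_1,\dots,x_k]\cong[x_1,\dots,x_k]'$.

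Then I would invoke Lemma \ref{lema1}: since $H$ carries the zero differential and $[x_1,\dots,x_k]'\ne\emptyset$ (this non-emptiness transfers from $L$ along the zig-zag, again by the non-emptiness part of Proposition \ref{invarianza}), we get $0\in[x_1,\dots,x_k]'$. Combining with the previous paragraph, $0\in[x_1,\dots,x_k]$, which negates (1); and the bijection $[x_1,\dots,x_k]\cong[x_1,\dots,x_k]'$ negates (2). Hence if either (1) or (2) holds, $L$ cannot be formal, which is exactly the statement.

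I do not expect any serious obstacle here; the theorem is essentially a formal consequence ("immediate consequence", as the text says) of the preceding lemmas. The only mild subtlety worth spelling out is the passage from a single quasi-isomorphism in Proposition \ref{invarianza} to an arbitrary zig-zag — one should note that Lemma \ref{Biy} (bijection on homotopy classes of maps out of the cofibrant object $\Lie(U)$) already handles quasi-isomorphisms in either direction, so no cofibrancy/fibrancy bookkeeping beyond what is in Lemma \ref{cofib} is needed, and the composite of bijections is a bijection. Beyond that, the proof is just chaining the two cited results.
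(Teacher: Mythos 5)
Your argument is correct and is exactly the paper's intended proof: the paper dispatches Theorem \ref{teorema} as an ``immediate consequence of Proposition \ref{invarianza} and Lemma \ref{lema1},'' and your contrapositive chaining of those two results (including the routine upgrade of Proposition \ref{invarianza} from a single quasi-isomorphism to a zig-zag) is precisely the intended reasoning, just written out in more detail.
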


Two easy to check instances where the zero element criterion works are the complex projective spaces $\C P^n$ for $n\geq 2$, and fat wedges $T\left( S^{n_1},..., S^{n_k}\right)$ of $k\geq 3$ simply connected spheres. In some cases, the zero element criterion cannot be applied, is not conclusive, or maybe one only has information about some particular cycles at hand. Then, one may use the cardinality criterion. The following example illustrates this point.

\begin{example}\rm Let $X=\left(\left(S^3\times S^3\times S^3\times S^3\right) \vee S^6\right) \cup \left\{ e_a,e_b,e_c \right\}$, where the $9$-cells $e_a,e_b$ and $e_c$ have been attached by the following Whitehead products, respectively: $$\left[\id_{ S^6},\id_{ S^3_2}\right], \quad \left[\id_{ S^6},\id_{ S^3_3}\right] \quad \textrm{ and } \quad \left[\id_{ S^6},\id_{ S^3_4}\right].$$ Here, the subindex $i$ of $\id_{ S^3_i}$ indicates that this map is the composition of a generator of $\pi_3\left( S^3\right)\cong \Z$ with the inclusion as the $i$th copy inside the product $ S^3\times S^3\times S^3\times S^3.$ The space $X$ is not coformal.
\end{example}
\begin{proof}  In view of the cellular decomposition given, the Quillen minimal model of $X$ is  $$\Lie\left(v_1,v_2,v_3,v_4,v_{12},v_{13},v_{14},v_{23},v_{24},v_{34},v_{123},v_{124},v_{134},v_{234},v_{1234},z,a,b,c\right),$$ where
$$ |v_{i_1...i_s}|	=3s - 1 \quad \textrm{ for every } \quad 1 \leq i_1 < \cdots < i_s \leq 4 \quad \textrm{ and } \quad 1 \leq s \leq 3,  \quad \textrm{ and }$$ 
$$ |z|	= 5, \qquad |a|=|b|=|c| = 8.$$ The differential is given on the elements $v_{i_1...i_s}$ by equation (\ref{Diferencial}) for every $1 \leq i_1 < \cdots < i_s \leq 4$, and $$\partial z = 0, \quad \partial a = [z,v_2], \quad \partial b= [z,v_3], \quad \textrm{ and } \quad \partial c	= [z,v_4].$$ We claim that $\left[ v_1,v_2,v_3,v_4\right]=\left\{0\right\}.$ Indeed, any extension $\phi$ in the corresponding diagram is of the form
\begin{align*}
\phi(u_i)			&=v_i 						& \textrm{for } i = 1,2,3,4,\\
\phi(u_{ij})		&= v_{ij} 					& \textrm{for } 1 \leq i < i \leq 4,\\
\phi(u_{ijk})		&= v_{ijk} + c_{ijk}			& \textrm{for } 1 \leq i < j < k \leq 4.
\end{align*} Here, $c_{ijk}$ can be any degree 8 cycle. But since $H_8\left(L\right)=0$, each $c_{ijk}$ is necessarily a boundary $\partial b_{ijk}$, of which there are many. The four-fold Whitehead product $\left[ v_1,v_2,v_3,v_4\right]$ is then given by all those homology classes represented by 
\begin{align*}
\phi(w)	&= [v_{123},v_4]-[v_{124},v_3]+[v_{12},v_{34}]+[v_{14},v_{23}]+[v_1,v_{234}]-[v_{13},v_{24}]+[v_{134},v_2]\\
		& \ \ + \partial\left([b_{123},v_4]-[b_{124},v_3]+[v_1,b_{234}]+[b_{134},v_2]\right)\\
		&= \partial v_{1234} + \partial c.
\end{align*} Since the cycle above is a boundary, the claim follows.  This shows that the zero element criterion cannot be applied for discarding the coformality of $X$, at least for the election of cycles done. However, the chosen cycles indeed discard the coformality of $X$. We show next that the corresponding Whitehead bracket set in homology has many non trivial homology classes. In fact,  we prove that $$\left[\bar v_1,\bar v_2,\bar v_3,\bar v_4\right]'=\left\{ \alpha\left[\bar z, \bar z\right] \mid \alpha \in \Q \right\},$$ and then an application of Thm. \ref{teorema} (2) finishes the proof. Note that the possible choices for the extensions (now onto the homology $H=H(L)$) are given by:
\begin{align*}
\phi(u_{ij})				&= \lambda_{ij} \bar z \quad \textrm{ for any } \lambda_{ij}\in \Q, \quad 1 \leq i < j \leq 4,\\
\phi(u_{ijk})				&=  0 \quad \ \ \quad \textrm{ for every } \quad \quad \ 1 \leq i < j < k \leq 4.
\end{align*}
\noindent This is so, since there is a unique non trivial homology class in $H_5$, represented by $z$, and the differential commutes for this choice:
$$ \partial \phi (u_{ij}) = 0 = \overline{\partial v_{ij}} = \overline{[v_i,v_j]}= \phi\left([u_i,u_j]\right) =\phi\partial (u_{ij}).$$
On the other hand, there are no non trivial cycles in $L_8$, and therefore $ 0$ is the only possible choice for $\phi(u_{ijk}).$ Therefore, $\left[\overline v_1,\overline v_2,\overline v_3,\overline v_4\right]'$ consist of all those homology classes represented by 
$$ \phi(w) = \left[\lambda_{12}\bar z, \lambda_{34}\bar z\right] + \left[\lambda_{14}\bar z, \lambda_{23}\bar z\right] + \left[\lambda_{13}\bar z, \lambda_{24}\bar z\right] = \big(\lambda_{12}\lambda_{34} + \lambda_{14}\lambda_{23} + \lambda_{13}\lambda_{24} \big) \left[\bar z, \bar z\right].$$ Since there are many non trivial classes above, it follows that $\# \left[\bar v_1,\bar v_2, \bar v_3, \bar v_4\right]'\  \neq \ \# \left[\bar v_1,\bar v_2, \bar v_3, \bar v_4\right],$ proving that $X$ is not coformal.
\end{proof}

\subsection{Intrinsic coformality}

We characterize the intrinsic coformality of a finite product of simply connected odd dimensional spheres by combining $L_\infty$ structures with higher Whitehead products. 

Recall that a connected space $X$ is \emph{intrinsically formal} if any connected space whose rational cohomology  algebra is isomorphic to $H^*(X;\Q)$ has the same rational homotopy type as $X$. In other words, if there is a unique rational homotopy type whose rational cohomology algebra is isomorphic to $H^*(X;\Q).$ Dually, a simply connected space $X$ is \emph{intrinsically coformal} if any simply connected space whose rational homotopy Lie algebra is isomorphic to $\pi_*(\Omega X)\otimes \Q$ has the same rational homotopy type as $X$. In other words, if there is a unique rational homotopy type whose rational homotopy Lie algebra is isomorphic to $\pi_*(\Omega X)\otimes \Q.$ It is well known (see \cite{Nei78}, or \cite[Lemma 1.6]{Hal79}) that the wedge $S^{n_1}\vee \cdots \vee S^{n_k}$ of $k$ simply connected spheres is formal, and it is a theorem of Baues (\cite{Bau77}, see also \cite[Thm. 1.5]{Hal79}) that it is intrinsically formal if every $n_i$ is odd. Dually, (\cite{Nei78}), the product $S^{n_1}\times \cdots \times S^{n_k}$ is coformal for any $k$ simply connected spheres. We prove the following refinement.

\begin{theorem}\label{IntriCo} The product of $k$ simply connected odd dimensional spheres $S^{n_1}\times \cdots \times S^{n_k}$ is intrinsically coformal if and only if 
\begin{enumerate}[{(1)}]
	\item $k\leq 4$, or
	\item $k\geq 5$ and	\begin{center}$n_i \neq n_{j_1}+\cdots + n_{j_r}-1$ for every $i$ and subset $\{n_{j_1},...,n_{j_r}\}\subseteq\left\{n_1,...,n_k\right\}$, where $r\geq 4$ is  even.\end{center}
\end{enumerate} 
\end{theorem}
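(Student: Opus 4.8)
The plan is to use Corollary~\ref{SulCuad} together with Theorem~\ref{teorema} to convert the question of intrinsic coformality into a purely combinatorial statement about a free graded Lie algebra. The rational homotopy Lie algebra of $S^{n_1}\times\cdots\times S^{n_k}$ with all $n_i$ odd is the free graded Lie algebra $\Lie(x_1,\dots,x_k)$ with $|x_i|=n_i-1$ even; since the $x_i$ have even degree, all iterated brackets $[x_i,x_i]$, triple brackets, etc.\ are genuinely nonzero, and this Lie algebra has a huge (infinite-dimensional) underlying vector space. Intrinsic coformality asks whether \emph{every} simply connected space whose rational homotopy Lie algebra is isomorphic to this fixed $\Lie(x_1,\dots,x_k)$ is rationally homotopy equivalent to the product. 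Equivalently — and this is the reformulation I would make precise first — whether every DGL $L$ with $H_*(L)\cong \Lie(x_1,\dots,x_k)$ as a graded Lie algebra is formal. By Theorem~\ref{teorema}, a necessary condition for this to fail is the existence of higher Whitehead brackets $[x_{i_1},\dots,x_{i_r}]$ in $H_*(L)$ whose realization is not forced to be bijective with the $\ell=0$ model; and conversely I would argue that when no such obstruction is available, formality is automatic.

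The key computation is: in $H=\Lie(x_1,\dots,x_k)$ with trivial differential, when is the $r$-fold Whitehead bracket set $[x_{i_1},\dots,x_{i_r}]$ nonempty, and what does it contain? Following the construction around diagram~(\ref{ProdLie}), nonemptiness of $[x_{i_1},\dots,x_{i_r}]$ requires that all the intermediate obstruction classes vanish in $H$, i.e.\ each $\partial u_{J}$ for proper subwords $J$ is a boundary; since $H$ has zero differential this means each such bracket-combination is literally zero in $\Lie(x_1,\dots,x_k)$. Because the $x_i$ are linearly independent generators of a free Lie algebra, these combinations are \emph{never} zero unless the construction degenerates — so the only way to get a nonempty bracket set landing nontrivially is through a \emph{degree coincidence} that lets us add extra cycles. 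Concretely, $\Lie(x_1,\dots,x_k)$ has elements in the degrees $n_{j_1}+\cdots+n_{j_s}-s$ (for distinct indices) and also in degrees coming from brackets with repetitions; the element $u_J$ in the universal model has degree $\sum_{j\in J} n_j - 1$, and there is room for a nonzero Whitehead bracket precisely when this degree coincides with the degree of an existing Lie word, i.e.\ when $n_i = n_{j_1}+\cdots+n_{j_r}-1$ for some $i$ and some subset. The parity bookkeeping — all $n_j-1$ even — forces $r$ to be even for such a coincidence to survive the signs in (\ref{Diferencial}), and one checks $r\geq 4$ since $r=2$ gives the ordinary bracket which is already in the Lie structure. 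When $k\leq 4$ there are simply not enough indices to form a four-fold subset violating the condition (using that all $n_i\geq 3$), which is why case~(1) is unconditional.

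For the ``if'' direction I would show that when the numerical condition holds, any DGL $L$ with $H_*(L)\cong\Lie(x_1,\dots,x_k)$ is formal. The argument: build a minimal $L_\infty$ model on $H$ via a homotopy retract; the higher brackets $\ell_r$ of this model are obstructions living in the Whitehead bracket sets, and by the degree analysis above every such set is either empty or a single point equal to its value in the trivial-differential model, so by Theorem~\ref{teorema}(2) (contrapositive) no obstruction to formality can arise — more carefully, one shows the $\ell_{\geq 3}$ can be gauged away degree by degree because the relevant obstruction groups vanish. For the ``only if'' direction, when $n_i = n_{j_1}+\cdots+n_{j_r}-1$ with $r\geq 4$ even, I would exhibit explicitly a non-formal DGL (or equivalently a non-quadratic Sullivan model via Corollary~\ref{SulCuad}) realizing the Lie algebra $\Lie(x_1,\dots,x_k)$: attach a cell that creates a nontrivial $r$-fold Whitehead bracket hitting the class dual to $x_i$, exactly as in the worked Example above with $k=4$, $r=4$, $n_i$ chosen so that $n_i = n_{j_1}+n_{j_2}+n_{j_3}+n_{j_4}-1$; the cardinality criterion then distinguishes it from the product. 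The main obstacle I anticipate is the ``if'' direction: proving that the numerical condition genuinely \emph{suffices} for formality requires controlling \emph{all} potential higher brackets and all of the subtle degree coincidences among repeated-generator Lie words (e.g.\ $[x_i,[x_i,x_j]]$-type elements), and showing none of them opens up an obstruction; this is where careful use of the free Lie algebra's Hall-basis structure and the parity constraint, rather than a slick argument, will be needed.
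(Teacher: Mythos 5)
Your proposal founders at the very first step: the rational homotopy Lie algebra of the \emph{product} $P=S^{n_1}\times\cdots\times S^{n_k}$ is not the free graded Lie algebra $\Lie(x_1,\dots,x_k)$ --- that is the homotopy Lie algebra of the \emph{wedge}. Since $\Omega(X\times Y)\simeq \Omega X\times \Omega Y$ and $\pi_*(\Omega S^{n})\otimes\Q$ is one-dimensional for $n$ odd, $\pi_*(\Omega P)\otimes\Q$ is the $k$-dimensional \emph{abelian} graded Lie algebra on classes $x_i$ of even degree $n_i-1$. Everything downstream of this misidentification --- the claim that iterated brackets are ``genuinely nonzero'', the infinite-dimensionality, the analysis of which obstruction classes $\partial u_J$ are boundaries in a free Lie algebra, the appeal to Hall bases and repeated-generator words such as $[x_i,[x_i,x_j]]$ --- is aimed at the wrong object, and the question you end up studying (which DGLs with free Lie homology are formal) is a different and much harder one. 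You do land on the correct numerical condition $n_i=n_{j_1}+\cdots+n_{j_r}-1$ with $r$ even, but the route to it is not sound; in particular your explanation of case (1) (``not enough indices to form a four-fold subset'') fails for $k=4$, where such a subset does exist and the real point is that $n_{j_1}+\cdots+n_{j_4}-1>n_i$ for every $i$ because each $n_j\ge 3$.

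The paper's argument is short precisely because the Lie algebra is abelian and finite-dimensional: a rational homotopy type with homotopy Lie algebra $\pi_*(\Omega P)\otimes\Q$ amounts to a minimal $L_\infty$ structure $\{\ell_n\}$ on the $k$-dimensional space $L=\langle x_1,\dots,x_k\rangle$ with $\ell_2=0$. All odd-arity brackets vanish because $L$ is concentrated in even degrees; skew-symmetry kills any bracket with two linearly dependent even-degree entries, so a nonzero $\ell_r$ forces the arguments to be $r$ distinct generators; and a degree count shows $\ell_r(x_{j_1},\dots,x_{j_r})$ can only be a multiple of some $x_i$ with $n_i=n_{j_1}+\cdots+n_{j_r}-1$. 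Hence under condition (1) or (2) every such structure is trivial and $P$ is intrinsically coformal, while a degree coincidence permits the one-bracket structure $\ell_r(x_{j_1},\dots,x_{j_r})=x_i$, which carries a nontrivial $r$-fold Whitehead product (via Proposition 3.1 of \cite{HigherWhitehead} and Theorem \ref{teorema}) and so realizes a second, non-coformal homotopy type. If you replace the free Lie algebra by the abelian one, your outline collapses to this degree argument and none of the obstruction-theoretic ``gauging away'' of higher brackets is needed.
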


\begin{proof} Denote $P=S^{n_1}\times \cdots \times S^{n_k}$. Let $L$ be an $L_\infty$ algebra whose underlying graded vector space is generated by homogeneous $x_1,...,x_k$ of degree $|x_i|=n_i -1$, and that $\ell_1=\ell_2=0.$ Hence, $L\cong \pi_*(\Omega P)\otimes \Q$ as graded Lie algebras. Every odd dimensional bracket $\ell_{2n+1}$ vanishes, since $$|\ell_{2n+1}\left(x_{i_1},...,x_{i_{2n+1}}\right)|=\underbrace{|x_{i_1}|+\cdots+|x_{i_{2n+1}}|}_{\textrm{even}} + \underbrace{2n+1-2}_{odd}\in L_{\textrm{odd}}=0.$$

For $k\leq2$ spheres, the result is straightforward. For $k=3$ (resp. $k=4$), the fact that even dimensional brackets $\ell_{2n}$ vanish whenever two arguments are linearly dependent, and that $\dim L = 3 $ (resp. $\dim L = 4$) implies that the whole $L_\infty$ structure is trivial. The homotopy type represented by $L$ is precisely $P$, which is therefore intrinsically coformal. 

Let $k\geq 5$. If $n_i = n_{j_1}+\cdots + n_{j_r}-1$,  endow $L$ with the $L_\infty$ structure all of whose brackets vanish except for $$\ell_{r}\left(x_{j_1},...,x_{j_r}\right)=x_i.$$ Observe that Proposition 3.1 of \cite{HigherWhitehead} readily implies that, for a given $L_\infty$ structure $\{\ell_n\}$ on the homology of a DGL with $\ell_i$ vanishing up to $i=k-1$, with $k\geq2$, such that the Whitehead bracket set $[x_1,...,x_k]$ is non-empty, it follows that $[x_1,...,x_k]$ consists of the single homology class given by $\ell_k(x_1,...,x_k)$. Hence, the homotopy type represented by $L$ carries the non trivial $r$th order Whitehead product $[x_{j_1},...,x_{j_r}]=\{x_i\}$, and it is therefore not coformal. This prevents $P$ from being intrinsically coformal.

If on the other hand, $n_i \neq n_{j_1}+\cdots + n_{j_r}-1$ for any choice of $i$, $r$ even and $j_k$, then every bracket $\ell_n$ vanishes. Indeed, assume that for some even integer $r$ (necessarily $4\leq r \leq k$) and for some $z_i \in L$ we have that $\ell_r\left(z_1,...,z_r\right)=z_{r+1}\neq 0.$ Then, the elements $z_1,...,z_r$ are linearly independent, so after a change of basis we may assume that  $\{z_1,...,z_r\} = \{x_{j_1},...,x_{j_r}\} \subseteq \{x_1,...,x_k\}$. But then, for some index $i$, $$|\ell_r\left(z_1,...,z_r\right)|=(n_{j_1}-1)+\cdots + (n_{j_r}-1)+r-2 = n_i-1,$$ a contradiction.
\end{proof}

\begin{remark}\rm When $P$ is not intrinsically coformal, we have proven the existence of a \emph{finite} complex with the same homotopy Lie algebra. This is because $L$ is concentrated in even degrees, so $\mathcal C(L)=\Lambda sL$ has graded vector space of generators $sL$ of odd degree, forcing $\mathcal C(L)$ to be finite dimensional. So, $\mathcal L \mathcal C (L)=\Lie(s^{-1}\Lambda^+sL)$ is of finite type. The classical approach (i.e., not using infinity structures) for building a different homotopy type $X$ with the rational homotopy Lie algebra of $P$ would very likely involve attaching infinitely many cells to abelianize the Lie algebra $\pi_*(\Omega X)\otimes \Q$.
\end{remark}

Baues' theorem can be proven as a straightforward consequence of a degree argument for $A_\infty$ structures (see for instance \cite[Thm. 11]{Kad09}). The Eckmann-Hilton dual to Baues' theorem is given below. Its proof is a straightforward consequence of the fact that the whole $L_\infty$ structure vanishes by a degree argument. Observe that Quillen's theory does not require finite type hypothesis.

\begin{theorem} \label{IntriCo2}An arbitrary product of simply connected even dimensional Eilenberg-Mac Lane spaces $\prod_{i\in I} K(\Q,n_i)$ is intrinsically coformal.
\end{theorem}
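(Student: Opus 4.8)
The plan is to follow the same strategy that worked for the odd-dimensional sphere case and for the Eckmann--Hilton dual of Baues' theorem: show that any $L_\infty$ algebra realizing the homotopy Lie algebra $\pi_*(\Omega \prod_{i\in I} K(\Q,n_i))\otimes\Q$ must have \emph{all} higher brackets (including $\ell_2$) forced to vanish by a pure degree count, so that the rational homotopy type is rigidly determined. First I would record that $\pi_*(\Omega K(\Q,n))\otimes\Q = \pi_{*}(K(\Q,n-1))\otimes\Q$ is one-dimensional concentrated in degree $n-1$, so for $P=\prod_{i\in I}K(\Q,n_i)$ the graded Lie algebra $\pi_*(\Omega P)\otimes\Q$ is the abelian Lie algebra $L$ with underlying graded vector space spanned by homogeneous elements $x_i$, $i\in I$, of degree $|x_i|=n_i-1$, which is \emph{odd} since each $n_i$ is even.

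Next I would take an arbitrary $L_\infty$ algebra $L'$ with $\ell_1=0$ whose underlying graded Lie algebra (i.e. $(L',\ell_2)$) is isomorphic to $L$; by Quillen's theory (Theorem \ref{linfinito} and the discussion around Corollary \ref{SulCuad}) it suffices to show that $L'$ is, up to $L_\infty$ isomorphism, the abelian DGL on $L$, for then $\mathscr C(L')\cong\mathscr C(L)$ and the rational homotopy type is the one of $P$. Since $L'\cong L$ as graded vector spaces, $L'$ is concentrated in odd degrees. Now the key degree argument: for any $k\ge 2$ and any homogeneous $y_1,\dots,y_k\in L'$,
\begin{equation*}
|\ell_k(y_1,\dots,y_k)| = \sum_{j=1}^k |y_j| + (k-2).
\end{equation*}
If $k$ is even, $\sum_j|y_j|$ is even and $k-2$ is even, so $\ell_k(y_1,\dots,y_k)$ lies in even degree, hence in $L'_{\mathrm{even}}=0$. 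If $k$ is odd, I would use skew-symmetry: $\ell_k$ vanishes whenever two of its arguments are linearly dependent, and more to the point, on a vector space concentrated in odd degrees the $\ell_k$ for $k$ odd is a symmetric-looking map that must vanish on any repeated argument; combined with the fact that any odd-length skew-symmetric bracket among odd-degree elements produces an element of degree $\sum_j|y_j| + (k-2)$ which is $\text{odd}+\text{odd}=\text{even}$ — wait, that is again even, so in fact \emph{every} $\ell_k$, $k\ge 2$, lands in $L'_{\mathrm{even}}=0$ regardless of the parity of $k$. Thus $\ell_k=0$ for all $k\ge 2$, i.e. $L'$ is the abelian DGL on $L$.

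Therefore every $L_\infty$ algebra realizing $\pi_*(\Omega P)\otimes\Q$ is $L_\infty$ isomorphic (indeed equal) to the abelian DGL on $L$, so there is a unique rational homotopy type with this homotopy Lie algebra, namely that of $P$ itself; hence $P$ is intrinsically coformal. I do not expect a serious obstacle here: the only point requiring a little care is the bookkeeping of degrees and the remark (already used in the paper, e.g. in the proof of Theorem \ref{IntriCo}) that it is legitimate to work with an arbitrary $L_\infty$ model rather than a Lie model, together with the observation that no finite-type hypothesis is needed because Quillen's functors $\mathscr C$ and its adjoint are defined without such restrictions — so the argument goes through for an arbitrary index set $I$.
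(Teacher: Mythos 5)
Your proposal is correct and follows exactly the argument the paper has in mind: since each $n_i$ is even, $\pi_*(\Omega P)\otimes\Q$ is concentrated in odd degrees, and for every $k$ the bracket $\ell_k$ lands in degree $\sum_j|y_j|+k-2$, which is even regardless of the parity of $k$, so the entire $L_\infty$ structure on any minimal model realizing this Lie algebra vanishes. Your remarks on reducing to a minimal $L_\infty$ model and on Quillen's theory not requiring finite type are precisely the points the paper flags, so the proof is essentially identical to the intended one.
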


\section{Coformality and the collapse of the Quillen spectral sequence}\label{ssQuillen}

The Quillen spectral sequence of a simply connected space $X$, introduced in \cite{Qui69}, has been studied in \cite{All73,All77,Tan83,Ret85}. Recently (\cite{Ber17}), a version of this spectral sequence capturing group actions on $X$ which do not necessarily preserve the base point has been derived. It is claimed in \cite[\S 2.3]{Ber17} that the collapse of the Quillen spectral sequence is weaker than the coformality of $X$, a true fact proven in this section. There is literature relating the collapse of a certain spectral sequence with (co)formality. For instance, a characterization of the formality of a DGL (in fact, of an $L_\infty$ algebra) has been given in terms of the Chevalley-Eilenberg spectral sequence in \cite{Man15}. Similar claims, extending these results for algebras over operads, have been recently given in \cite{Mar17}. \\

In this section, extending in the obvious way the Quillen spectral sequence to arbitrary $L_\infty$ algebras, we show that even restricting to DGL's, formality of one such a DGL is not equivalent to the collapse at the second page of its Quillen spectral sequence. Example \ref{Ejemplo} complements the references mentioned above and merges the higher Whitehead brackets into the picture.\\

For a coaugmented graded coalgebra $C$ with comultiplication $\Delta$ and coaugmentation kernel $\bar C$, the \emph{reduced diagonal} $\bar \Delta:\bar C \to \bar C^{\otimes 2}$ is given by $\bar \Delta (x)=\Delta (x)-\left(1\otimes x + x\otimes 1\right)$. For $n\geq0$, the \emph{iterated reduced diagonals} $\Delta^n:\bar C\to \bar C^{\otimes n+1}$ are $\Delta^0=\id, \Delta^1=\bar \Delta$ and recursively for every $n \geq 2$ by $\Delta^n = \left(\bar \Delta\otimes \id^{\otimes n-1}\right)\Delta^{n-1}$. There is an ascending differential graded coalgebra filtration $\mathcal F$ given by $F_pC=\Ker\left(\Delta^p\right)$, $p\geq1$. $C$ is \emph{conilpotent} if this filtration exhausts $\bar C.$ Every graded coalgebra of the form $C=\K\oplus C_{>0}$ is conilpotent. In particular, for a reduced $L_\infty$ algebra $L$, its Quillen chains $\mathcal C(L)$ is conilpotent. In case $C=\Lambda V$ for some graded vector space $V$, it follows that $F_pC=\Lambda^{\leq p}V$. The \emph{Quillen spectral sequence} of the $L_\infty$ algebra $L$ is the coalgebra spectral sequence determined by the differential graded filtered module $(\mathcal C (L), \mathcal F).$ The first two pages of this spectral sequence are $(E^0,d^0)\cong (\Lambda sL,\delta_1)$ and $(E^1,d^1)\cong (\Lambda sH,H(\delta_2))$. For a simply connected space $X$ with Lie model $L$, the Quillen spectral sequence converges to $H_*(X;\Q)$ as graded coalgebras. 

An $L_\infty$ algebra $L$ is \emph{formal} if it is weakly equivalent to its homology endowed with an $L_\infty$ structure for which all higher brackets vanish except for possibly the bracket $\ell_2$ induced by $L$.

\begin{proposition}\label{coformalImplica} If $L=L_{\geq 0}$ is a formal $L_\infty$ algebra, then its Quillen spectral sequence collapses at $E^2$.
\end{proposition}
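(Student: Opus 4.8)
The plan is to exploit the identification (via Theorem \ref{linfinito}) between a formal $L_\infty$ structure on $L$ and a purely quadratic codifferential on $\Lambda sH$, and then to show that such a quadratic codifferential already is the $E^1$-differential of the Quillen spectral sequence, so that nothing is left to differentiate from $E^2$ onward. Concretely, since $L = L_{\geq 0}$ is formal, it is weakly equivalent to $H = H_*(L)$ equipped with an $L_\infty$ structure whose only nonzero bracket is $\ell_2$; by Theorem \ref{linfinito}(1) this corresponds to a codifferential $\delta = \delta_2$ on $\Lambda sH$, homogeneous of word-length degree $-1$. The spectral sequences of weakly equivalent $L_\infty$ algebras agree from $E^1$ on (a quasi-isomorphism of filtered complexes; here this follows because $f_1$ is a quasi-isomorphism and the filtration is the word-length filtration, which is respected by the CDGC morphism of Theorem \ref{linfinito}(2), each $f_k$ raising word length), so it suffices to compute the Quillen spectral sequence of $(\Lambda sH,\delta_2)$.

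First I would recall from the excerpt that the filtration is $F_p(\Lambda sH) = \Lambda^{\leq p}sH$, that $E^0 \cong (\Lambda sH, \delta_1) = (\Lambda sH, 0)$ since the structure on $H$ is minimal, and hence $E^1 \cong E^0 = \Lambda sH$ with $d^1$ induced by $\delta_2$. Now the key observation: because $\delta = \delta_2$ is \emph{exactly} homogeneous of filtration degree $-1$ (it lowers word length by precisely $1$, with no lower-order terms $\delta_3, \delta_4, \dots$), the differential $d^1 = \delta_2$ on $E^1 = \Lambda sH$ is literally the full differential $\delta$. Therefore $E^2 = H(\Lambda sH, \delta_2) = H(\Lambda sH, \delta)$, and all higher differentials $d^r$ for $r \geq 2$ are induced by the components of $\delta$ that shift filtration by $r \geq 2$ — but there are none. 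Hence $d^r = 0$ for all $r \geq 2$ and the spectral sequence collapses at $E^2$. I would phrase this as: the associated graded of $(\Lambda sH,\delta)$ with respect to $\mathcal F$ is $(\Lambda sH,\delta)$ itself, so $E^1$ already computes the homology and $E^\infty = E^2$.

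The one point needing care — and the step I expect to be the main obstacle — is the transfer of the collapse statement across the weak equivalence: a priori an $L_\infty$ quasi-isomorphism $F\colon H \to L$ is not a strict morphism, so one must check it induces a morphism of the \emph{filtered} complexes whose associated spectral sequences it identifies from $E^1$. This is exactly what Theorem \ref{linfinito}(2) provides: $F$ corresponds to a CDGC morphism $\mathscr C(F)\colon(\Lambda sH,\delta)\to\mathscr C(L)$, and since each component $(\pi F)^{(k)}\colon\Lambda^k sH \to sL$ raises word length (a CDGC morphism sends $\Lambda^{\leq p}$ into $\Lambda^{\leq p}$ because it is determined by a map to the cogenerators and then extended multiplicatively as a coalgebra map), $\mathscr C(F)$ is filtration-preserving. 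On $E^0$ it induces $H(F_1) = H(f_1)$ tensored appropriately, which is an isomorphism on $E^1$ by the quasi-isomorphism hypothesis; by the comparison theorem for spectral sequences of filtered complexes, $\mathscr C(F)$ induces an isomorphism $E^r(\Lambda sH,\delta)\cong E^r(\mathscr C(L))$ for all $r\geq 1$, compatible with all differentials. Since the left-hand side collapses at $E^2$ by the previous paragraph, so does the Quillen spectral sequence of $L$. I would close by remarking that the hypothesis $L = L_{\geq 0}$ is used precisely to guarantee $\mathscr C(L)$ is conilpotent so that the filtration $\mathcal F$ is exhaustive and the spectral sequence is well-behaved (converges), as set up earlier in the section.
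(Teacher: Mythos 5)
Your proof is correct and follows essentially the same route as the paper: reduce via Theorem \ref{linfinito} and the comparison theorem for filtered complexes (the CDGC morphism preserves the word-length filtration and is an isomorphism on $E^1$) to the case of a codifferential with only quadratic part, and then observe that this forces $d^r=0$ for $r\ge 2$ --- the paper does this last step by an explicit manipulation of representatives in $Z^r_p$ and $D^r_p$, while you do it by noting that $(\Lambda sH,\delta_2)$ coincides with its own associated graded. One small caution on phrasing: the slogan ``$d^r$ is induced by the component of $\delta$ of filtration degree $-r$'' is false in general (for a double complex, $d^2$ is a nontrivial zig-zag even though the total differential only has components of degrees $0$ and $-1$), but your argument is rescued by the point you make immediately afterwards, namely that $\delta=\delta_2$ is \emph{exactly} homogeneous of degree $-1$, so no zig-zag can occur and $E^2=H(\Lambda sH,\delta)=E^\infty$.
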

\begin{proof} Let $L$ be a formal $L_\infty$ algebra. Then, there exist $L_\infty$ quasi-isomorphisms 
\begin{center}
\begin{tikzcd}
L \ar[shift left]{r}{} & L' \ar[shift left]{l}{},
\end{tikzcd}
\end{center} where $L'$ is a minimal $L_\infty$ algebra with $\ell_n=0$ for every $n\geq 3.$ Therefore, there are CDGC quasi-isomorphisms 
\begin{center}
\begin{tikzcd}
\left(\Lambda sL,\delta \right) \ar[shift left]{r}{} & \left(\Lambda sL',\delta \right) \ar[shift left]{l}{}.
\end{tikzcd}
\end{center} By comparison, both spectral sequences are isomorphic from the first page. Hence, it suffices to consider the case in which $L$ has vanishing higher brackets except for possibly $\ell_2$. Recall that, if $$Z^k_p=F_p \mathcal C(L) \cap \delta^{-1}\left(F_{p-k} \mathcal C(L)\right)\quad \textrm{and} \quad D_p^k=F_p\mathcal C(L) \cap \delta\left(F_{p+k}\mathcal C(L)\right),$$ then the differential $d^k$ in the Quillen spectral sequence is induced by the restrictions $ \delta:Z^k_p \to Z^k_{p-k}$ as in the diagram below:
\begin{center}
\begin{tikzcd}[row sep=huge, column sep=huge]
Z^k_p \ar{r}{\delta} 	\ar[two heads]{d}		&		Z^2_{p-2} \ar[two heads]{d}\\
E^k_p = Z^k_p / Z^{k-1}_{p-1}+D^{k-1}_{p} \ar[dashed]{r}{d^k} &   E^k_{p-k}= Z^k_{p-k} / Z^{k-1}_{p-k-1}+D^{k-1}_{p-k}
\end{tikzcd}
\end{center}  Let $k\geq 2$. To see that $d^k=0$, given $x\in Z^k_p=\Lambda^{\leq p} sL \cap \delta^{-1}\left( \Lambda^{\leq p-k} sL\right)$, we find a representative $y$ of the class $\overline x\in E^k_p$ for which $\delta(y)\in Z^{k-1}_{p-k-1}+D^{k-1}_{p-k}=\Lambda^{\leq p-k-1} sL \cap \delta^{-1}\left( \Lambda^{\leq p-k-1}sL \right)+ \Lambda^{\leq p-k} sL \cap \delta \left( \Lambda^{\leq p-1} sL \right).$ Indeed, write $x=x_1 + \cdots + x_p$ with each $x_i\in  \Lambda^i sL.$ Since $\delta(x)\in \Lambda^{\leq p-k}sL,$ necessarily $\delta\left(x_{p-k+1}+\cdots + x_p\right)=0.$ Hence, $y=x-\left(x_{p-k+1}+\cdots + x_p\right)$ is one such representative.
\end{proof}

The result above is a particular case of the following more general statement, whose proof is analogous: for an $L_\infty$ algebra $L$ all of whose brackets vanish up to $\ell_k$, its Quillen spectral sequence has pages $E^0 = \cdots = E^{k-1} \cong \Lambda sL$, and $d^{k-1}$ is identified with $\delta_k = s\ell_k (s^{-1})^{\otimes k}.$ An important consequence of this rather elementary fact is that the least $n$ for which $\ell_n$ is non trivial is an invariant of the $L_\infty$ quasi-isomorphism class of minimal $L_\infty$ algebras.\\

We give the promised example.

\begin{example}\label{Ejemplo} Let $X$ be the rationalization of the total space in a fibration of the sort $$S^7 \to E \to K(\Z,2)\times K(\Z,4).$$ Then, the Quillen spectral sequence of $X$ collapses at $E^2$ and all higher Whitehead products vanish, yet $X$ is not coformal.
\end{example}

\begin{proof}  Basic facts on rational homotopy theory imply that the rational homotopy Lie algebra $\pi_*(\Omega X)\otimes \Q$ is three dimensional on $x,y,z$ of degrees $1,3,6$ respectively, and carries an $L_\infty$ structure for which all higher brackets vanish except for $$\ell_2(y,y)=\ell_3(y,x,x)=z.$$ Equivalently, its Sullivan minimal model is the free CDGA $\left(\Lambda V,d\right)$ generated by $x,y,z$ of degrees $2,4,7$, respectively, with differential given by $dx=dy=0$ and $dz=y^2+yx^2.$  \\ We start by proving that $X$ is not coformal by showing that it does not admit a purely quadratic differential (Corollary \ref{SulCuad}). The commutative graded algebra automorphisms of $\Lambda V$ are of the form
\begin{equation*}
f(x)=ax, \quad f(y)=by+cx^2, \quad \textrm{and} \quad f(z)=ez
\end{equation*} for any $a,b,c,e\in \Q$ with $a,b,e\neq0$. The inverse of such an automorphism is  
\begin{equation*}
f^{-1}(x)=\frac{1}{a}x, \quad f^{-1}(y)=\frac{1}{b}y-\frac{c}{a^2b}x^2, \quad \textrm{and} \quad f^{-1}(z)=\frac{1}{e}z.
\end{equation*} The differentials $d'$ on $\Lambda V$ for which $f:\left(\Lambda V,d'\right) \to \left(\Lambda V,d\right)$ is a CDGA isomorphism are of the form $d'=fdf^{-1}.$ By uniqueness of the minimal Sullivan model, to prove our claim it is enough to compute all these possible differentials $d'.$ Since $x$ and $y$ are cycles, any differential on $\Lambda V$ is determined by its value on $z$:
\begin{align*}
d'(z) &= fdf^{-1}(z) = \frac{1}{e} fd(z) = \frac{1}{e} f\left( y^2+yx^2 \right) = \frac{1}{e}\left(fy\right)^2 + \frac{1}{e}\left(fy\right)\left(fx\right)^2\\
	&= \frac{1}{e} \left(by+cx^2\right)^2 + \frac{1}{e} \left(by+cx^2\right)\left(a^2x^2\right)\\
	&= \frac{1}{e} \left(b^2y^2 + c^2x^4 + 2bcyx^2\right) + \frac{a^2}{e} \left(byx^2+cx^4\right)\\
	&= \frac{b^2}{e} y^2 + \frac{b(2c+a^2)}{e} yx^2 + \frac{c(c+a^2)}{e}x^4.
\end{align*} For $d'$ to be purely quadratic, it should be that $b\left(2c+a^2\right)=c\left(c+a^2\right)=0$. Since $b\neq 0$, it follows that $a^2=-2c$. And, since $a\neq 0$, also $c\neq 0$. But $c=2c$, a contradiction. Thus $X$ is not coformal.

We prove next that the Quillen spectral sequence of $X$ collapses at $E^2$. The coalgebra model $\Lambda U$ of $X$ is the linear span of $$\left\{ \ x^ny^m,\ x^ny^mz  \ \mid \ n,m \geq 0\ \right\},$$ with codifferential $\delta$ determined by 
\begin{equation*}
\delta(x^ny^m) = \displaystyle  {{\left(\frac{m(m-1)}{2}\right)} x^n y^{m-2}z + {\left(\frac{nm(m-1)}{2}\right)} x^{n-1}y^{m-2}z \quad \textrm{ if } m\geq 2 }, 
\end{equation*} and zero otherwise. By definition, proving that $d^2=0$ is the same as proving that, for any given $p\geq 0$,
\begin{equation}\label{SeTrivializa}
\Phi \in Z^2_p \quad \Rightarrow \quad \delta(\Phi) \in Z^{1}_{p-3} + D^{1}_{p-2}.
\end{equation} If $\Phi$ is a cycle, we are done. Otherwise, assume that $\Phi\in \Lambda^{\leq p} U$ is of the form $\Phi=x^ny^m$ with $n+m \leq p.$ In this case, $$\delta(\Phi) = {\left(\frac{m(m-1)}{2}\right)} x^n y^{m-2}z + {\left(\frac{nm(m-1)}{2}\right)} x^{n-1}y^{m-2}z.$$ If $n+m=p$, then $\Phi\notin Z^2_p$. Thus, it suffices to check equation (\ref{SeTrivializa}) for $n+m\leq p-1$. But then, by the very definition, $\Phi\in \Lambda^{\leq p-1} U$ is such that $\delta(\Phi)\in \Lambda^{\leq p-2}U$, that is, $\delta(\Phi)\in D_{p-2}^1$, and implication (\ref{SeTrivializa}) is satisfied. For $r >2,$ proving that $d^r=0$ is the same as proving, for any given $p\geq 0$, that
\begin{equation*}
\Phi \in Z^r_p \quad \Rightarrow \quad \delta(\Phi) \in Z^{r-1}_{p-r-1} + D^{r-1}_{p-r}.
\end{equation*} The codifferential $\delta$ has only homogeneous components $\delta_2$ and $\delta_3$. All elements in $Z^r_p$ are therefore cycles, and the condition is then trivially satisfied. To conclude that all higher Whitehead products vanish, observe that by a classical theorem of Allday (\cite[Thm. 4.1]{All77}), the differential $d^{k-1}$ of the spectral sequence represents higher Whitehead products of order $k$ whenever these are defined. Since the differentials vanish for $k\geq 2$, the claim follows. 
\end{proof}

\section{Higher Whitehead products and Sullivan $L_\infty$ algebras}\label{SullivanYLInf}

The detection of higher Whitehead products in Sullivan models was studied by P. Andrews and M. Arkowitz in \cite{Ark78}. In that work, the authors very explicitly determined how to read off the (ordinary, as well as) higher Whitehead products from the differential of the minimal Sullivan model of a simply connected finite type rational complex. In this Section, we recover and slightly generalize this result by relying on Quillen's formulation of rational homotopy theory and the results of \cite{HigherWhitehead}.

To achieve the mentioned result, we recall in Section \ref{Arkowitz} the necessary background. Then we recall in Section \ref{SullivanLInfinity} that, under certain assumptions which we fix for the rest of the article, $L_\infty$ algebras uniquely correspond to Sullivan algebras (\cite{Bui13,Ber15}). Finally, we achieve the main goal of this section by showing how some of our previous results in \cite{HigherWhitehead} generalize and/or complement the main result of \cite{Ark78}. All spaces in this section are assumed to be simply connected rational CW-complexes of finite type.

\subsection{Higher Whitehead products in Sullivan models} \label{Arkowitz} Let $n_1, ... , n_r$ be fixed positive integers, and denote by $M_r(\Q)$ the square matrices of size $r$ with rational coefficients. Define the map 
$$\widetilde\rho\colon M_r(\Q)\longrightarrow \Q,\qquad \widetilde\rho(A)=\sum_{\sigma\in S_r} \varepsilon_            \sigma \, a_{1\sigma(1)} \cdots a_{r\sigma(r)},$$ where $A=(a_{ij})$, and $\varepsilon_\sigma$ is the sign arising from associating to each element  $a_{ij}$ a generator $w_j$ of degree $n_j$ in the free graded commutative algebra $\Lambda \left\{w_1,...,w_r\right\}$ and writing $w_1 \cdots w_r = \varepsilon_\sigma \,w_{\sigma(1)} \cdots w_{\sigma(r)}.$ More precisely, $\varepsilon_\sigma$ is the parity of
$$\sum_{i=1}^{r-1} \sum_{\substack{1\leq j <\sigma^{-1}(i) \\ \sigma(j)>i}} n_in_{\sigma(j)},$$ and it is $1$ whenever the above sum is empty. 

\begin{remark}\rm \label{remark}In other words, if one considers $\Q$-linear combinations $$y_1=a_{11}w_1+ \cdots + a_{r1}w_r , \quad \,\ . \ . \ . \ ,  \quad y_r=a_{r1}w_1+ \cdots + a_{rr}w_r$$ in the commutative graded algebra $\Lambda \left\{w_1,..., w_r\right\}$, and one forms the matrix $A=(a_{ij})$, then $\widetilde\rho(A)$ is simply the coefficient of the term $w_1\cdots w_r$ in the product $y_1\cdots y_r$, with the correct Koszul sign. In particular, if the integers $n_1,\dots,n_r$ are all odd, i.e., $w_1,\dots,w_r $ are oddly graded, then $\widetilde\rho$ is simply the determinant. Hence, in general,  we may think of $\widetilde \rho$ as a ``graded determinant''. In particular, observe that, if a row or a column of some matrix $A$ is the zero vector then $\widetilde\rho(A)=0$. However, in general, $\widetilde\rho$ does not vanish on matrices whose rows (or columns) are linearly dependent.
\end{remark}

Let $\left(\Lambda V,d\right)$ be the minimal Sullivan model of the simply connected complex $X$, fix a KS-basis  $\left\{v_i\right\}_{i\ge 1}$ of $V$ and homotopy classes $x_j\in \pi_{n_j}(X)$, for $1 \leq j \leq r$, and define a map 
$$
\rho=\rho_{\{v_i\}\{x_1,\dots,x_r\}}\colon \Lambda^{\geq r}V \longrightarrow \Q
$$
as follows: for a given $\Phi\in \Lambda^{\geq r }V$, write $$\Phi= \sum_{i_1 \leq \cdots \leq i_r} \lambda_{i_1...i_r} \, v_{i_1}\cdots v_{i_r} + \beta,$$ where $\lambda_{i_1...i_r}\in \Q$ and $\beta \in \Lambda^{>r}V$. For each $i_1 \leq \dots \leq i_r$ let $A_{i_1...i_r}\in M_r(\Q)$ be the matrix whose entries are given by $a_{pq}=\langle v_{i_p};x_q \rangle$, where $\langle\,\,;\,\rangle$ is the Sullivan pairing (\cite[Chap. 13 (c)]{Yve12}). Then,  $$\rho\left(\Phi\right)=\sum_{ i_1 \leq \cdots \leq i_r} \lambda_{i_1...i_r}\,   \widetilde\rho\left(A_{i_1...i_r}\right).$$

\begin{remark}\label{remarko2}\rm  Whenever the homotopy classes $x_1,\dots,x_r$ are linearly independent, $\rho(\Phi)$ has the following interpretation: identify $x_1,\dots,x_r$ with vectors $w_1,\dots,w_r$ of $V$ through the Sullivan pairing, and extend this to a new basis of $V$ which in turn produces a new basis of $\Lambda V$. Write
$$
\Phi=\lambda\, w_1\cdots w_r+\Gamma
$$
as a linear combination of this basis. Then, it follows from Remark \ref{remark} that $ \rho(\Phi)$ is precisely $\lambda$. 
\end{remark}

We are ready to recall  how the differential of a Sullivan minimal model captures higher Whitehead products. Denote $N=n_1+\cdots + n_r.$

\begin{theorem}{\em \cite[Thm. 5.4]{Ark78}} \label{TeoArk} Let $\left(\Lambda V,d\right)$ be the Sullivan minimal model of the simply connected complex $X$. Fix  a KS-basis $\{v_i\}$ of $V$ and homotopy classes $x_j\in \pi_{n_j}(X)$,  $1\leq j \leq r$, such that $\left[x_1,\dots,x_r\right]$ is defined. Let $\Phi\in(\Lambda V)^{N-1}$ such that $d\Phi\in\Lambda^{\ge r} V$. Then, for every $x\in \left[x_1,\dots,x_r\right]$, 
$$
\langle \overline \Phi;x\rangle = (-1)^\alpha  \rho(d\Phi),$$ where $\rho=\rho_{\{v_i\}\{x_1,\dots,x_r\}}$, $\alpha=\sum_{i<j}n_in_j$ and $\overline\Phi\in V$ is the linear part of $\Phi$. In particular, if $v\in V^{N-1}$ is such that $dv\in \Lambda^{\geq r}V$, then
$$
\langle  v;x\rangle = (-1)^\alpha  \rho(dv).$$ 
\end{theorem}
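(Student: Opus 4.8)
The plan is to reduce the statement to a computation inside the minimal Sullivan model of the fat wedge, exploiting naturality of the Sullivan pairing together with the fact that a morphism of Sullivan models is a chain map. By definition, $[x_1,\dots,x_r]$ is the set of homotopy classes $g_*(w)$, where $w\colon S^{N-1}\to T$ is the canonical map attaching the top cell of the product $P=S^{n_1}\times\cdots\times S^{n_r}$ onto its fat wedge $T=T(S^{n_1},\dots,S^{n_r})$, and $g\colon T\to X$ runs over the extensions of fixed representatives $f_j\colon S^{n_j}\to X$ of $x_j$ (such $g$ exist precisely because $[x_1,\dots,x_r]$ is defined). First I would fix one such $x=g_*(w)$, together with a morphism of minimal models $\psi=\mathcal M(g)\colon(\Lambda V,d)\to(\Lambda W,d)$, where $(\Lambda W,d)$ is the minimal model of $T$.

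Next I would record the relevant features of $(\Lambda W,d)$. From $H^*(T)\cong\Lambda(w_1,\dots,w_r)/(w_1\cdots w_r)$ one gets a KS-basis of $W$ consisting of generators $w_1,\dots,w_r$ with $|w_j|=n_j$ and $\langle w_j;\iota_q\rangle=\delta_{jq}$, auxiliary generators forced in higher degrees (coming from even $n_i$ and from proper subproducts), and one distinguished generator $\omega$ of degree $N-1$ with $d\omega=\epsilon\,w_1\cdots w_r+(\text{terms of word length}>r)$, $\epsilon=\pm1$, which can be chosen so that $\langle\omega;w\rangle=\pm1$ while every other KS-basis element pairs trivially with $w$. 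A direct inspection of this model shows that for a proper nonempty $S\subsetneq\{1,\dots,r\}$ the element $\prod_{i\in S}w_i$ represents a nonzero class of $H^*(T)$, hence is not a coboundary; consequently $\omega$ is the only generator whose differential involves a term of word length $r$ in the $w_i$'s.

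With this in hand I would run two computations. On the one hand, naturality of the Sullivan pairing, together with the fact that $\psi$ sends $\Lambda^{\ge2}V$ into $\Lambda^{\ge2}W$, gives $\langle\overline\Phi;x\rangle=\langle\psi(\Phi)_{\mathrm{lin}};w\rangle=c\cdot\langle\omega;w\rangle$, where $c$ is the coefficient of $\omega$ in $\psi(\Phi)$. On the other hand I would extract the coefficient of $w_1\cdots w_r$ from the identity $d\psi(\Phi)=\psi(d\Phi)$. Writing $\psi(\Phi)=c\,\omega+\Psi$ with $\Psi$ free of $\omega$, the observation above forces $d\Psi$ to contain no $w_1\cdots w_r$, so the left-hand side contributes $\epsilon\,c$; while, expanding $d\Phi=\sum_{i_1\le\cdots\le i_r}\lambda_{i_1\dots i_r}v_{i_1}\cdots v_{i_r}+\beta$ with $\beta\in\Lambda^{>r}V$ and using $\psi(\beta)\in\Lambda^{>r}W$, the word-length-$r$ part of $\psi(v_{i_1}\cdots v_{i_r})$ that lies in the $w_i$'s alone is $\prod_p\big(\sum_q\langle v_{i_p};x_q\rangle\,w_q\big)$, whose $w_1\cdots w_r$-coefficient is exactly $\widetilde\rho(A_{i_1\dots i_r})$ by Remark \ref{remark}; so the right-hand side contributes $\sum\lambda_{i_1\dots i_r}\widetilde\rho(A_{i_1\dots i_r})=\rho(d\Phi)$. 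Equating gives $\epsilon\,c=\rho(d\Phi)$, hence $\langle\overline\Phi;x\rangle=\epsilon\langle\omega;w\rangle\cdot\rho(d\Phi)$, and the in-particular statement is just the case $\Phi=v$, where $\overline\Phi=v$.

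The conceptual skeleton is short; the work lies elsewhere. The main obstacle, I expect, is twofold: (a) pinning down the fat-wedge model precisely enough — that $\omega$ can be taken with $d\omega$ of leading term $\pm w_1\cdots w_r$ and dual to $w$, with no auxiliary degree-$(N-1)$ generator pairing with $w$; and (b) the sign bookkeeping, i.e.\ identifying $\epsilon\langle\omega;w\rangle$ with $(-1)^{\alpha}$, $\alpha=\sum_{i<j}n_in_j$, which I would settle by an explicit calculation inside the fat-wedge model (for instance reducing to the case where all $n_i$ are odd, where $\widetilde\rho$ is an ordinary determinant and the sign is produced by reordering oddly graded generators). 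As an alternative, one can avoid topology entirely and carry out the same bilinear bookkeeping on the Quillen side, using the dual pairing between $(\Lambda sL,\delta)$ and $(\Lambda(sH)^\sharp,d)$ recalled in the Preliminaries together with the description of $[x_1,\dots,x_r]$ via DGL extensions of $\Lie(U)$ from Section \ref{Criterios}; that is the route which upgrades to the $L_\infty$ statement of Theorem \ref{TeoArkInf}.
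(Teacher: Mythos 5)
Your route is genuinely different from the paper's. The paper does not reprove Theorem \ref{TeoArk} topologically: it quotes it from \cite{Ark78} and then, in Section \ref{SullivanYLInf} (see the proof of Theorem \ref{TeoArkInf} together with Remark \ref{Remarkillos}(1)), re-derives it on the Quillen side from Proposition 3.1 of \cite{HigherWhitehead}: for the $L_\infty$ structure $\{\ell_k\}$ induced on $H$ by a homotopy retract, every $x\in[x_1,\dots,x_r]$ satisfies $\varepsilon\,\ell_r(x_1,\dots,x_r)=x+\sum_{j=2}^{r-1}\ell_j(\Phi_j)$, and pairing with $v$ via formula (\ref{pairing}) kills the correction terms because $d_jv=0$ for $j<r$. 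That argument localizes all the difficulty in the cited Proposition 3.1 and produces the sign for free. Your argument is instead the classical Andrews--Arkowitz one: reduce to the universal example (the fat wedge) via naturality of the Sullivan pairing and the chain-map identity $d\psi(\Phi)=\psi(d\Phi)$. The naturality and coefficient-extraction half of what you write is sound, and identifying the $w_1\cdots w_r$-coefficient of $\psi(v_{i_1}\cdots v_{i_r})$ with $\widetilde\rho(A_{i_1\dots i_r})$ is exactly Remark \ref{remark}.

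The gap is that everything you have actually argued reduces the theorem to its own universal case, and that case is precisely what you defer. Concretely: (a) you need that the degree-$(N-1)$ generator $\omega$ of the fat-wedge model with $d\omega=\epsilon\, w_1\cdots w_r+(\text{higher word length})$ satisfies $\langle\omega;w\rangle=\pm1$, with the sign combining with $\epsilon$ to give $(-1)^\alpha$; this is the statement of Theorem \ref{TeoArk} for $X=T$ and $x=w$, so it cannot simply be ``chosen'' --- it must be computed, e.g.\ via the relative model of $T\hookrightarrow P$ and the identification of $w$ with the image of the generator of $\pi_N(P,T)\otimes\Q$, and this is where the sign $(-1)^\alpha$ actually comes from. (b) Your justification that $d\Psi$ contributes nothing to $w_1\cdots w_r$ is insufficient as stated: the observation that proper subproducts $\prod_{i\in S}w_i$ are not coboundaries does not by itself rule out other degree-$(N-1)$ generators whose differentials contain $w_1\cdots w_r$ (a basis change removes them, but one must check this is compatible with $\omega$ being dual to $w$ and the others pairing trivially with $w$), nor, for $r\ge3$, contributions to the $w_1\cdots w_r$-coefficient from $d$ applied to the decomposable part of $\psi(\Phi)$, which lies in $\Lambda^{\ge3}W$ and can a priori meet $\Lambda^r\langle w_1,\dots,w_r\rangle$. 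These are exactly the points where \cite{Ark78} has to work, and which the paper's $L_\infty$ derivation absorbs into Proposition 3.1 of \cite{HigherWhitehead}. Your closing alternative --- running the bilinear bookkeeping on the Quillen side --- is in fact the paper's actual argument.
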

The rational number $ \rho(dv)$ depends on the chosen basis for $V$ but only of the $r$th part $d_r$ of the differential $d$.  As an illustrative example, we keep the notation of this theorem in the following result, which is obvious in view of  Remark \ref{remarko2}:

\begin{corollary} Let $v\in V$ be such that, in the chosen homogeneous basis of $V$, 
$$dv= \sum_{i_1 \leq \cdots \leq i_r} \lambda_{i_1...i_r} \, v_{i_1}\cdots v_{i_r} + \beta,\qquad\beta\in \Lambda^{>r} V.
$$Fix $i_1\le \dots\le i_r$ and let  $x_{i_1},\dots,x_{i_r}$ be the homotopy classes  dual to $v_{i_1},\dots,v_{i_r}$ through the Sullivan pairing. Then, for each $x\in[x_{i_1},\dots,x_{i_r}]$,
$$
\langle  v;x\rangle = (-1)^\alpha \lambda_{i_1...i_r}.\eqno{\square}
$$
\end{corollary}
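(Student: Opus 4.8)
The plan is to deduce the statement directly from Theorem \ref{TeoArk} applied to the homotopy classes $x_{i_1},\dots,x_{i_r}$, followed by an explicit evaluation of $\rho(dv)$ in the chosen basis. First I would check that the hypotheses of the ``in particular'' clause of Theorem \ref{TeoArk} are met (assuming, as the statement does, that $[x_{i_1},\dots,x_{i_r}]$ is defined). Since the word-length-$r$ terms $v_{i_1}\cdots v_{i_r}$ of $dv$ have degree $n_{i_1}+\cdots+n_{i_r}=N$ and $dv$ is homogeneous, we get $|v|=N-1$, so $v\in V^{N-1}$; moreover $dv\in\Lambda^{\ge r}V$ by the very form of its expansion. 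Hence for each $x\in[x_{i_1},\dots,x_{i_r}]$ the theorem yields
$$\langle v;x\rangle=(-1)^\alpha\,\rho(dv),\qquad \rho=\rho_{\{v_i\}\{x_{i_1},\dots,x_{i_r}\}},$$
with $\alpha=\sum_{p<q}n_{i_p}n_{i_q}$. It therefore remains only to show that $\rho(dv)=\lambda_{i_1\dots i_r}$.

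For this I would invoke Remark \ref{remarko2}. The classes $x_{i_1},\dots,x_{i_r}$ are, by construction, dual through the Sullivan pairing to the distinct basis vectors $v_{i_1},\dots,v_{i_r}$, and are in particular linearly independent. Remark \ref{remarko2} then identifies $\rho(dv)$ with the coefficient of $v_{i_1}\cdots v_{i_r}$ in the $\Lambda^r$-component of $dv$ in this basis, which is exactly $\lambda_{i_1\dots i_r}$. Alternatively, one computes directly from the definition of $\rho$: for a multi-index $j_1\le\cdots\le j_r$ indexing a word-length-$r$ monomial of $dv$, the associated matrix $A_{j_1\dots j_r}$ has entries $a_{pq}=\langle v_{j_p};x_{i_q}\rangle=\delta_{j_p i_q}$ by duality of the pairing on the KS-basis. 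By Remark \ref{remark} the graded determinant $\widetilde\rho(A_{j_1\dots j_r})$ vanishes as soon as a column of $A_{j_1\dots j_r}$ is zero, so a nonzero contribution forces the multiset $\{j_1,\dots,j_r\}$ to coincide with $\{i_1,\dots,i_r\}$; since both tuples are sorted, this means $j_p=i_p$ for every $p$. For this surviving term $A_{i_1\dots i_r}$ is the identity matrix, whence $\widetilde\rho(A_{i_1\dots i_r})=1$, and therefore $\rho(dv)=\lambda_{i_1\dots i_r}$. Combining this with the displayed formula gives $\langle v;x\rangle=(-1)^\alpha\lambda_{i_1\dots i_r}$, as claimed.

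The only step requiring genuine care is the evaluation $\rho(dv)=\lambda_{i_1\dots i_r}$: one must verify that $\widetilde\rho$ selects precisely the diagonal multi-index and evaluates to $1$ there, which rests on the Kronecker-delta behaviour of the Sullivan pairing on a KS-basis together with the distinctness of the indices $i_1,\dots,i_r$ (equivalently, the linear independence of the dual classes, which is exactly the hypothesis under which Remark \ref{remarko2} applies). Everything else is a direct substitution into Theorem \ref{TeoArk}, so the corollary is essentially immediate once this bookkeeping is settled.
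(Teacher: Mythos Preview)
Your proposal is correct and follows exactly the route the paper intends: the corollary is stated there as ``obvious in view of Remark~\ref{remarko2}'', and you unpack precisely that --- apply the ``in particular'' clause of Theorem~\ref{TeoArk} to obtain $\langle v;x\rangle=(-1)^\alpha\rho(dv)$, then use Remark~\ref{remarko2} (or the equivalent direct Kronecker-delta computation you give) to read off $\rho(dv)=\lambda_{i_1\dots i_r}$. Your added verification that $v\in V^{N-1}$ and $dv\in\Lambda^{\ge r}V$ is a welcome bit of bookkeeping the paper leaves implicit, and your flag that the clean identification $\rho(dv)=\lambda_{i_1\dots i_r}$ uses linear independence of the $x_{i_j}$ (the hypothesis of Remark~\ref{remarko2}) is accurate and worth keeping in mind.
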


\subsection{Sullivan $L_\infty$ algebras} \label{SullivanLInfinity}

To compare the results of the previous section with the $L_\infty$ structures on $\pi_*(\Omega X)\otimes \Q$, we need to recall when and how Sullivan algebras correspond to $L_\infty$ structures. We refer the reader to \cite{Bui13} and \cite{Ber15} in order to find the most general results and a meticulous study of the subtleties concerning this duality.

Recall that, by Theorem \ref{linfinito}, an $L_\infty$ structure on  a graded vector space $L$ uniquely corresponds to a codifferential $\delta$ on the coalgebra  $\Lambda sL$. If, in addition, $L=L_{\ge 0}$ is finite type and non-negatively graded, then $sL$ is concentrated in positive degrees and therefore $\Lambda sL$ is also finite type and connected. On the other hand, it is well known that dualization provides a one to one correspondence between finite type CDGA's and CDGC's. Hence, an $L_\infty$ structure on a finite type, non negatively graded vector space $L$ corresponds to the CDGA  $\bigl((\Lambda sL)^\sharp,\delta^\sharp\bigr)$ dual to the CDGC $(\Lambda sL,\delta)$. Under the same finiteness and bounding hypothesis, a slight generalization of the proof of \cite[Lemma 23.1]{Yve12} provides an isomorphism of commutative graded algebras
$$
\Lambda (sL)^\sharp\cong (\Lambda sL)^\sharp
$$
via the following pairing \cite[p. 294]{Yve12}, which is essential in what follows: for each $k\ge 1$, denote 
$V=(sL)^\sharp$ and define
\begin{equation}\label{pairingoriginal}
\langle\,\,;\,\rangle\colon \Lambda^kV\times \Lambda^ksL\longrightarrow\Q,
\end{equation}
 $$\langle v_1\cdots v_k ; sx_k \wedge ... \wedge sx_1\rangle=\sum_{\sigma\in S_k}\varepsilon_\sigma \langle v_{\sigma(1)};sx_1\rangle \cdots \langle v_{\sigma(k)};sx_k\rangle,$$
 where $v_1\dots v_k=\varepsilon_\sigma\, v_{\sigma(1)}\dots  v_{\sigma(k)}$. Through this isomorphism, the differential $\delta^\sharp$ in $(\Lambda sL)^\sharp$ becomes the differential  $d$ in $\Lambda(sL)^\sharp=\Lambda V$ whose $k$th part $d_k$ satisfies
\begin{equation}\label{pairing}
\langle d_k v;sx_1\wedge ... \wedge sx_k \rangle
=\varepsilon \langle v;s\ell_k( x_1,\dots, x_k) \rangle
\end{equation}
where  $v\in (sL)^\sharp$, $x_i\in L$, $i=1,\dots,k$ and $\varepsilon=(-1)^{|v|+\sum_{j=1}^{k-1}(k-j)|x_j|}$. Via this equality, for each $v\in (sL)^\sharp$ and any $k\ge 1$, $d_k v$ is well defined and vanishes for $k$ big enough, due to the finite type and the bounding assumption respectively. Summarizing, 

\begin{proposition}$L_\infty$ algebras on a finite type, non negatively graded vector space $L$ are in one to one correspondence with differentials $d$ on the free commutative graded algebra $\Lambda V$ generated by $V=(sL)^\sharp$. Explicitly, for each $k\ge 1$, the $k$th bracket $\ell_k$ on $L$ and the $k$th part $d_k$ determine each other via the formula (\ref{pairing}) above. \hfill$\square$
\end{proposition}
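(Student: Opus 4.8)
The statement to prove is a clean consequence of the preceding setup, so the plan is essentially bookkeeping with the bar construction and the pairing~\eqref{pairing}. First I would recall that, by Theorem~\ref{linfinito}, an $L_\infty$ structure $\{\ell_k\}$ on a finite type, non-negatively graded $L$ corresponds to a codifferential $\delta$ on the conilpotent coalgebra $\Lambda sL$, and that $\delta = \sum_{k\ge 1}\delta_k$ where $\delta_k$ is the unique coderivation extending $h_k = (-1)^{k(k-1)/2} s\circ \ell_k\circ (s^{-1})^{\otimes k}\colon \Lambda^k sL\to sL$. Dualizing, $\delta^\sharp$ is a derivation on $(\Lambda sL)^\sharp$, and under the algebra isomorphism $(\Lambda sL)^\sharp\cong \Lambda V$ with $V=(sL)^\sharp$ provided by the pairing~\eqref{pairingoriginal} (the finite type and bounding hypotheses are exactly what makes this isomorphism and the dualization valid), $\delta^\sharp$ becomes a derivation $d$ on $\Lambda V$. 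A derivation of a free commutative graded algebra is determined by its restriction to generators $V$, and $d$ being a differential ($d^2=0$) is equivalent to $\delta$ being a codifferential; this is the content of the proposition.

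The core computation is the equivalence ``$\delta$ is a coderivation extending $h_k$'' $\iff$ ``$d$ has $k$th part $d_k$ satisfying~\eqref{pairing}''. Here I would argue as follows: for $v\in V=(sL)^\sharp$, the element $d v\in \Lambda^+V$ decomposes as $\sum_k d_k v$ with $d_k v\in \Lambda^k V$, and $d_k v$ is completely pinned down by its pairings against all $sx_1\wedge\cdots\wedge sx_k$ with $x_i\in L$. By definition of the dual derivation, $\langle d_k v; sx_1\wedge\cdots\wedge sx_k\rangle = \langle v; \pi\,\delta_k(sx_1\wedge\cdots\wedge sx_k)\rangle$ where $\pi\colon \Lambda sL\to sL$ is the projection, since $v$ is primitive and so only sees the indecomposable part. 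But $\pi\,\delta_k(sx_1\wedge\cdots\wedge sx_k) = h_k(sx_1\wedge\cdots\wedge sx_k) = (-1)^{k(k-1)/2}\, s\,\ell_k(x_1,\dots,x_k)$ up to the Koszul sign coming from~\eqref{Coder2} (the term where all $k$ factors are fed into $h_k$). Tracking that sign together with the sign appearing in the pairing~\eqref{pairingoriginal} and the suspension isomorphisms produces exactly the factor $\varepsilon = (-1)^{|v|+\sum_{j=1}^{k-1}(k-j)|x_j|}$ asserted in~\eqref{pairing}. Running this argument in reverse recovers $\ell_k$ from $d_k$, establishing the bijection on the level of structure maps.

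The remaining points are that the assignment is well defined and exhaustive: every derivation $d$ on $\Lambda V$ with $d^2=0$ arises this way. Exhaustiveness follows because $d$ is determined by $\{d_k|_V\}_{k\ge 1}$, each $d_k|_V$ dualizes to a map $\Lambda^k sL\to sL$ hence (via~\eqref{CorresL}) to a skew-symmetric $\ell_k\colon L^{\otimes k}\to L$ of the correct degree $k-2$; the finite type hypothesis guarantees that ``$d_k v$ is well defined'' (the dual of a finite-dimensional-in-each-degree space behaves well) and the non-negative grading guarantees that for fixed $v$ only finitely many $d_k v$ are nonzero, so $dv\in\Lambda^+V$ genuinely makes sense, matching the fact that $\sum_k\delta_k$ is a locally finite sum on the connected coalgebra $\Lambda sL$. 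Finally, $d^2=0\iff\delta^2=0$ under dualization, and $\delta^2=0$ is, by Theorem~\ref{linfinito}(1), precisely the generalized Jacobi identities for $\{\ell_k\}$.

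The step I expect to be the main obstacle is pinning down the signs --- reconciling the Koszul sign $\varepsilon_\sigma$ in the pairing~\eqref{pairingoriginal}, the sign $(-1)^{k(k-1)/2}$ in~\eqref{CorresL}, the Koszul sign $\varepsilon$ in the coderivation formula~\eqref{Coder2}, and the suspension shifts, and checking they combine to exactly $(-1)^{|v|+\sum_{j=1}^{k-1}(k-j)|x_j|}$. Everything else is a formal transcription of Theorem~\ref{linfinito} and \cite[Lemma~23.1]{Yve12} under the standing finiteness and boundedness assumptions; one could also simply cite \cite{Bui13,Ber15}, where this correspondence and its sign conventions are worked out in full, and limit the proof here to indicating how~\eqref{pairing} drops out of the dualized bar construction.
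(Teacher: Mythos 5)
Your proposal is correct and follows essentially the same route as the paper, which proves this proposition implicitly in the paragraph preceding it: Theorem \ref{linfinito} identifies $L_\infty$ structures with codifferentials on $\Lambda sL$, the finite type and non-negative grading hypotheses allow dualization and the isomorphism $\Lambda(sL)^\sharp\cong(\Lambda sL)^\sharp$ of \cite[Lemma 23.1]{Yve12}, and the pairing (\ref{pairing}) transcribes $\delta_k$ into $d_k$. Your expansion of the sign bookkeeping and the well-definedness/exhaustiveness remarks (finite type giving well-definedness of $d_kv$, boundedness giving its vanishing for large $k$) matches exactly what the paper asserts, and your suggestion to defer the sign conventions to \cite{Bui13,Ber15} is precisely what the paper does.
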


From now on, $L_\infty$ algebras are all non negatively graded and of finite type. We can easily detect when a given $L_\infty$ algebra on $L$ provides a Sullivan algebra through the correspondence above. 

\begin{proposition} Let $L$ be an $L_\infty$ algebra and let $(\Lambda V,d)$ be the corresponding CDGA. Then, the following are equivalent: 
\begin{itemize}
	\item[(1)] $(\Lambda V,d)$ is a Sullivan algebra.
	\item[(2)] There exists an ordered homogeneous basis $\left\{x_i\right\}_{i\in I}$ of $L$ such that for every $k\geq 1$, the class of $\ell_k\left(x_{i_1},...,x_{i_k}\right)$ vanishes in the quotient $L/L^{>j}$, where $j=\max \left\{i_1,...,i_k\right\}$ and $L^{>j}$ is the linear span of $\left\{x_i \mid i>j\right\}$.
	\item[(3)] $L$ is ``degree-wise nilpotent'': in the \emph{lower central series} $L=\Gamma^0 L \supseteq \Gamma^1 L \supseteq ... $ of $L$, where each $\Gamma^k L$ is the subspace of $L$ generated by all possible brackets using at least $k$ elements of $L$, for every $n$ there exists some $k$ with the property that $(\Gamma^k L)_n=0.$
\end{itemize}
\end{proposition}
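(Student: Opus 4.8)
The plan is to prove the equivalence of the three conditions by establishing the cycle $(1)\Rightarrow(2)\Rightarrow(3)\Rightarrow(1)$, unpacking what a Sullivan algebra structure says in terms of the pairing formula (\ref{pairing}). Recall that $(\Lambda V,d)$ is a Sullivan algebra precisely when $V$ admits a filtration $V(0)\subseteq V(1)\subseteq\cdots$ with $\bigcup V(n)=V$ and $dV(n)\subseteq \Lambda V(n-1)$; equivalently, $V$ admits a well ordered homogeneous basis $\{v_i\}_{i\in I}$ such that $dv_i\in\Lambda V_{<i}$, where $V_{<i}$ is spanned by the predecessors of $v_i$. Since $V=(sL)^\sharp$, a homogeneous basis $\{v_i\}$ of $V$ is dual to a homogeneous basis $\{sx_i\}$ of $sL$, hence to a homogeneous basis $\{x_i\}$ of $L$. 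The first step is therefore to translate the Sullivan condition $dv_i\in\Lambda V_{<i}$ into a statement about the brackets $\ell_k$ using (\ref{pairing}): $dv_i\in\Lambda V_{<i}$ means that $\langle d_k v_i; sx_{j_1}\wedge\cdots\wedge sx_{j_k}\rangle=0$ whenever $\max\{j_1,\dots,j_k\}\geq i$, which by (\ref{pairing}) is equivalent to $\langle v_i; s\ell_k(x_{j_1},\dots,x_{j_k})\rangle=0$ for all such multi-indices. Running over all $i$, this says exactly that for every $k$ and every choice of indices $j_1,\dots,j_k$ with $j=\max\{j_1,\dots,j_k\}$, the component of $\ell_k(x_{j_1},\dots,x_{j_k})$ along any basis vector $x_i$ with $i>j$ (i.e., with $v_i$ pairing nontrivially) vanishes — which is precisely condition (2), that $\ell_k(x_{j_1},\dots,x_{j_k})\equiv 0$ in $L/L^{>j}$.

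This bookkeeping essentially delivers $(1)\Leftrightarrow(2)$ at once, provided one is careful about one subtlety: the filtration of a Sullivan algebra is by subspaces $V(n)$, not literally by an ordered basis, so I would first invoke the standard fact (see \cite[\S12]{Yve12}) that a minimal — or merely connected, nonnegatively graded, finite type — CDGA $(\Lambda V,d)$ is Sullivan if and only if $V$ admits a \emph{well ordered} homogeneous basis with $dv_i\in\Lambda(V_{<i})$, refining the given filtration so that each step adds a single basis element. In our setting $L$ is nonnegatively graded and of finite type, so each $V^n=(sL)^\sharp$ in degree $n$ is finite dimensional and there are countably many such, so $V$ has a countable homogeneous basis and the refinement is routine. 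With the basis in hand, (\ref{pairing}) gives the dictionary above and (2) follows; conversely, given a basis as in (2), the same dictionary shows $d_k v_i\in\Lambda(V_{<i})$ for every $k$, hence $dv_i\in\Lambda(V_{<i})$, making $(\Lambda V,d)$ Sullivan.

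For $(2)\Rightarrow(3)$ I would argue that degree-wise nilpotence is automatic from the triangular structure of (2): fix a degree $n$ and consider the finitely many basis vectors $x_{i_1},\dots,x_{i_m}$ of $L$ in degree $n$ (finite by finite type). Condition (2) says any $k$-fold bracket of basis elements, reduced modulo $L^{>j}$ with $j$ the largest index appearing, vanishes; iterating, a long enough nested bracket is forced to land in the span of basis elements of strictly larger index than all its arguments, and since within degree $n$ there are only finitely many indices, after at most $m$ rounds of bracketing the degree-$n$ component must vanish. Making this precise requires choosing $k$ large enough relative to the number of degree-$n$ basis elements and the degrees of the potential arguments; since each $\ell_k$ shifts degree by $k-2$, the arguments of a bracket landing in degree $n$ have bounded total degree, hence come from a finite set of basis elements, giving a uniform bound on $k$. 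For $(3)\Rightarrow(1)$, I would run the argument of the last paragraph in reverse: degree-wise nilpotence lets us well order the basis of $L$ compatibly with the lower central series degree by degree, so that each $x_i$ appears ``after'' all the basis elements that can occur as leading terms of brackets producing it; dualizing gives the filtration on $V$ with $dV(n)\subseteq\Lambda V(n-1)$, i.e. (1). The main obstacle I anticipate is purely organizational — getting the direction of the ordering and the inequality $i>j$ versus $i<j$ consistent between the Lie side and the dual commutative side, and making the degree-wise finiteness argument in $(2)\Leftrightarrow(3)$ airtight — rather than any conceptual difficulty; the heart of the proof is just the pairing formula (\ref{pairing}) turning the Sullivan filtration condition into the triangularity condition (2).
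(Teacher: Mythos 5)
Your treatment of $(1)\Leftrightarrow(2)$ is correct and is essentially the paper's own argument: the Sullivan condition $dv_i\in\Lambda V_{<i}$ for a well-ordered homogeneous basis is translated, coefficient by coefficient through the pairing (\ref{pairing}), into the triangularity condition on the brackets. The paper, however, does not prove $(2)\Leftrightarrow(3)$ at all; it simply cites \cite[Thm.~3.2]{Ber15}. You attempt a direct proof of that equivalence, and this is where the problems lie.

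For $(2)\Rightarrow(3)$ your strategy (each bracket strictly increases the leading basis index, so long nested brackets in a fixed degree must vanish by finiteness) is sound, but two supporting claims fail once the unary bracket is taken into account: $\ell_1$ has degree $-1$, so ``the arguments of a bracket landing in degree $n$ have bounded total degree'' is false (a composite landing in degree $n$ can pass through intermediate results of arbitrarily large degree by interleaving $\ell_1$ with brackets against degree-zero elements), and the relevant finite index set is not the degree-$n$ basis elements but all basis elements that can occur as leading terms along a chain ending in degree $n$; one also needs to use the order type of $I$ to turn ``strictly increasing chain of indices'' into a length bound. These defects are repairable. The direction $(3)\Rightarrow(1)$, by contrast, is asserted rather than proved: ``well order the basis compatibly with the lower central series degree by degree'' does not work as stated, because $\ell_1$ preserves rather than increases lower-central-series depth ($\ell_1(\Gamma^kL)\subseteq\Gamma^kL$, not $\Gamma^{k+1}L$), so the filtration of $V$ dual to $L\supseteq\Gamma^1L\supseteq\cdots$ need not satisfy $dV(k)\subseteq\Lambda V(k-1)$ on the linear part of $d$; moreover the lower central series terminates at different stages in different degrees, and merging these degree-wise filtrations into a single well-ordering with the required triangularity is exactly the nontrivial content of Berglund's theorem. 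As written this step is a genuine gap: either carry out that construction in detail or, as the paper does, quote \cite[Thm.~3.2]{Ber15} for $(2)\Leftrightarrow(3)$.
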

\begin{proof} The equivalence between (1) and (2) arises simply from translating the property defining a Sullivan algebra to the brackets of $L$ through the formula (\ref{pairing}). On the other hand, the equivalence between (2) and (3) is precisely \cite[Thm. 3.2]{Ber15}.
\end{proof}

A \emph{Sullivan $L_\infty$ algebra} is an $L_\infty$ algebra satisfying any of the above equivalent conditions.\\

Let $(\Lambda V,d)$ be the minimal Sullivan algebra  equivalent to the minimal Sullivan $L_\infty$ algebra $\left(L,\left\{\ell_k\right\}\right)$ and fix a KS-basis  $\{v_i\}$ of $V$. Fixing elements $x_1,\dots,x_r\in L$ (not necessarily  of the given basis), the map 
$$
 \rho=\rho_{\{v_i\}\{x_1,\dots,x_r\}}\colon \Lambda^{\geq r}V \to \Q
$$
of the past section can be defined using the same procedure: write any element $\Phi\in \Lambda^{\geq r }V$ as $$\Phi= \sum_{i_1 \leq \cdots \leq i_r} \lambda_{i_1...i_r} \, v_{i_1}\cdots v_{i_r} + \beta,$$ where $\lambda_{i_1...i_r}\in \Q$ and $\beta \in \Lambda^{>r}V$, let $A_{i_1...i_r}\in M_r(\Q)$ be the matrix whose entries are given by $a_{pq}=\langle v_{i_p};sx_q \rangle$, and recall that $V=(sL)^\sharp$. Then,  $$ \rho\left(\Phi\right)=\sum_{ i_1 \leq \cdots \leq i_r} \lambda_{i_1...i_r}\,   \widetilde\rho\left(A_{i_1...i_r}\right).$$

\begin{remark}\rm \label{remarko3} In view of the pairing (\ref{pairingoriginal}), for any $v\in V$ and any $sx_{1},\dots,sx_{k}\in sL$,  a short computation shows that, for $\rho=\rho_{\{v\}\{x_1,\dots,x_k\}}$,
$$
\langle d_k v;sx_1\wedge ... \wedge sx_k \rangle= \rho(d_kv).
$$
\end{remark}

\subsection{Extending Andrews-Arkowitz's theorem to $L_\infty$ algebras}

Let $L$ be a Lie model of the simply connected finite type complex $X$, consider a homotopy retract $(L,H,i,q,K)$ of 
$L$, and let $\{\ell_k\}$ be the corresponding $L_\infty$ structure on $H$ (see \cite[p. 22]{HigherWhitehead}). Observe that  $H$ is a Sullivan $L_\infty$ algebra whose associated Sullivan algebra $(\Lambda V,d)$ is the minimal model of $X$ for which we fix a KS-basis. Then, the translation of Theorem  \ref{TeoArk} in this context reads:

\begin{theorem}\label{TeoArkInf} Let $x_j\in H$,  $1\leq j \leq r$, be such that $\left[x_1,\dots,x_r\right]$ is defined. Let $v\in V^{N-1}$ be such that $dv\in\Lambda^{\ge r}V$. Then, for every $x\in \left[x_1,\dots,x_r\right]$, 
$$
\langle  v;sx\rangle = \varepsilon \langle  v;s\ell_r(x_1,\dots,x_r)\rangle.$$ 
\end{theorem}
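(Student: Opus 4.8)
The plan is to transport Theorem~\ref{TeoArk} across the duality between Sullivan $L_\infty$ algebras and minimal Sullivan algebras established in Section~\ref{SullivanLInfinity}. First I would note that by Remark~\ref{Remars} the topological Whitehead product $[x_1,\dots,x_r]$ of $X$ corresponds to the Lie-algebraic Whitehead bracket set of the DGL model $L$, and by \cite[Prop. 3.1]{HigherWhitehead} (the result already invoked in the proof of Theorem~\ref{IntriCo}) the bracket set on $H$ with its induced minimal $L_\infty$ structure, when non-empty, is controlled by the brackets $\ell_k$; in particular the element $s\ell_r(x_1,\dots,x_r)$ plays the role of a representative of a class in $[x_1,\dots,x_r]$. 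So the two sides of the claimed identity are both well-defined, and what must be matched is a Sullivan pairing against $v$.

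Next I would use the explicit dictionary: the minimal Sullivan algebra $(\Lambda V,d)$ dual to $(H,\{\ell_k\})$ has $V=(sH)^\sharp$, and by formula (\ref{pairing}) its $r$th piece $d_r$ satisfies $\langle d_r v;sx_1\wedge\cdots\wedge sx_r\rangle=\varepsilon\langle v;s\ell_r(x_1,\dots,x_r)\rangle$ with $\varepsilon=(-1)^{|v|+\sum_{j=1}^{r-1}(r-j)|x_j|}$. Combining this with Remark~\ref{remarko3}, which identifies $\langle d_r v;sx_1\wedge\cdots\wedge sx_r\rangle$ with $\rho(d_r v)$ for $\rho=\rho_{\{v\}\{x_1,\dots,x_r\}}$, reduces the theorem to showing $\langle v;sx\rangle = \rho(d_r v) = \rho(dv)$, where the last equality holds because $\rho$ only depends on the word-length-$r$ part of its argument and $dv\in\Lambda^{\ge r}V$ by hypothesis. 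That remaining equality $\langle v;sx\rangle=(-1)^\alpha\rho(dv)$ (up to reconciling signs) is exactly the content of Theorem~\ref{TeoArk} applied to the minimal model $(\Lambda V,d)$ of $X$, once one checks that the homotopy classes $x_j\in\pi_{n_j}(X)$ there correspond under the Sullivan pairing to the chosen $x_j\in H$ and that the degree shift $|sx_j|=n_j$ makes the sign $\varepsilon$ agree with $(-1)^\alpha$, $\alpha=\sum_{i<j}n_in_j$.

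Therefore the proof is essentially: (i) invoke Remark~\ref{Remars} and \cite[Prop. 3.1]{HigherWhitehead} to see both sides make sense; (ii) apply Theorem~\ref{TeoArk} to the minimal model $(\Lambda V,d)$ associated with the homotopy retract to get $\langle v;sx\rangle=(-1)^\alpha\rho(dv)$; (iii) rewrite $\rho(dv)=\rho(d_r v)$ using $dv\in\Lambda^{\ge r}V$, then $\rho(d_r v)=\langle d_r v;sx_1\wedge\cdots\wedge sx_r\rangle$ by Remark~\ref{remarko3}, then $=\varepsilon\langle v;s\ell_r(x_1,\dots,x_r)\rangle$ by (\ref{pairing}); (iv) check $(-1)^\alpha$ and $\varepsilon$ match. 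The alternative hypothesis ``the homotopy retract is adapted to $x$'' (from the summary version of the theorem) should be handled in parallel: when $dv$ need not lie in $\Lambda^{\ge r}V$ a priori, being adapted to $x$ is precisely the condition guaranteeing that the lower-word-length contributions to $\langle\overline{dv};x\rangle$-type terms do not interfere, so one still gets $\rho(dv)=\rho(d_r v)$ effectively.

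The main obstacle I expect is the sign bookkeeping in step (iv): one must carefully track the Koszul signs coming from the suspension isomorphism $V=(sH)^\sharp$ (so a degree-$n_j$ class in $\pi_*(X)$ is a degree-$n_j$ element $sx_j$ but $x_j$ itself has degree $n_j-1$ in $H$), from the ``graded determinant'' $\widetilde\rho$ in Remark~\ref{remark}, and from the explicit $\varepsilon$ in (\ref{pairing}); reconciling these with the $(-1)^\alpha$ of Theorem~\ref{TeoArk} and absorbing any residual sign into the $\varepsilon$ appearing in the statement is the only genuinely delicate point. Everything else is a transcription of already-established correspondences.
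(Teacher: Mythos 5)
Your proposal is correct and follows essentially the same route as the paper's own proof: apply Theorem \ref{TeoArk} to get $\langle v;sx\rangle=(-1)^\alpha\rho(dv)$, use $dv\in\Lambda^{\ge r}V$ to replace $\rho(dv)$ by $\rho(d_rv)$, and then convert via Remark \ref{remarko3} and formula (\ref{pairing}) into $\varepsilon\langle v;s\ell_r(x_1,\dots,x_r)\rangle$. The extra preliminaries in your step (i) and the discussion of the adapted-retract variant are not needed for this statement (the paper relegates the latter to Remarks \ref{Remarkillos}), but they do no harm.
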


\begin{proof}
Indeed, recall that $V=(sH)^\sharp\cong\pi_*(X)\otimes \Q.$ Hence, Theorem \ref{TeoArk} states that $\langle  v;x\rangle=(-1)^\alpha\rho(dv)$. But, $\rho(dv)=\rho(d_kv)$, and in view of Remark \ref{remarko3} and formula (\ref{pairing}),
$$
(-1)^\alpha\rho(dv)=(-1)^\alpha\,\langle d_k v;sx_1\wedge ... \wedge sx_k \rangle=\varepsilon \langle v;s\ell_k( x_1,\dots, x_k) \rangle.
$$
\end{proof}

\begin{remarks}\label{Remarkillos} \rm  
(1) Observe that Theorem \ref{TeoArkInf}, and hence Theorem \ref{TeoArk}, can be easily deduced from our Proposition 3.1 of \cite{HigherWhitehead}. Indeed, via this result, and for any $x\in [x_1,\dots,x_r]$,
$$
\varepsilon \ell_r\left(x_1,...,x_r\right) = x + \sum_{j=2}^{r-1} \ell_j\left(\Phi_j\right), \qquad\Phi_j\in H^{\otimes j}.
$$
Hence, for any $v\in V^{N-1}$, 
$$
\left \langle v;sx \right\rangle = \varepsilon \left \langle v;s\ell_r\left(x_1,...,x_r\right)\right\rangle - \sum_{j=2}^{r-1}\left\langle v; s\ell_j\left(\Phi_j\right) \right\rangle.
$$
However, if $dv\in \Lambda^{\ge r}V$, then $d_j(v)=0$ for every $j<r$ and therefore, in view of formula (\ref{pairing}), the second term vanishes. Therefore,
$$
\langle  v;sx\rangle = \varepsilon\langle  v;s\ell_r(x_1,\dots,x_r)\rangle,$$ 
which is the statement of Theorem \ref{TeoArkInf}.

(2) Observe also that  \cite[Thm. 3.3]{HigherWhitehead} is a generalization of Theorem \ref{TeoArkInf}, and hence of Theorem \ref{TeoArk}, under the presence of an adapted homotopy retract. Indeed, given $x\in[x_1,\dots,x_r]$, \cite[Thm. 3.3]{HigherWhitehead} asserts that
$$
\ell_r(x_1,\dots,x_r)=x,
$$
whenever the $L_\infty$ structure on $H$ arises from a homotopy retract adapted to $x$. That is,
$$
\langle v;sx\rangle=\langle  v;s\ell_r(x_1,\dots,x_r)\rangle=(-1)^\alpha\rho(dv),\quad\text{for all $v\in V^{N-1}$,}
$$
and not only for those $v$ with $dv\in \Lambda^{\ge r}V$.

\end{remarks}

\bibliographystyle{plain}
\bibliography{MyBib}

\bigskip

\noindent\sc{Urtzi Buijs}\\ 
\noindent\sc{Departamento de \'Algebra, Geometr{\'\i}a y Topolog{\'\i}a,\\
Universidad de M\'alaga, Ap. 59, 29080 M\'alaga, Spain}\\
\noindent\tt{ubuijs@uma.es}

\bigskip

\noindent\sc{José M. Moreno-Fernández}\\ 
\noindent\sc{Max Planck Institute for Mathematics, \\ Vivatsgasse 7, 53111 Bonn, Germany}\\
\noindent\tt{josemoreno@mpim-bonn.mpg.de}

\end{document}